\theoremstyle{plain}
\newtheorem{thm}{Theorem}[subsection]
\newtheorem*{thm*}{Theorem}
\newtheorem{cor}[thm]{Corollary}
\newtheorem{prop}[thm]{Proposition}
\theoremstyle{definition}
\theoremstyle{definition}
\newtheorem{rem}[thm]{Remark}
\newtheorem*{ass*}{Assumption}
\numberwithin{equation}{section}
\newcommand\blfootnote[1]{%
  \begingroup
  \renewcommand\thefootnote{}\footnote{#1}%
  \addtocounter{footnote}{-1}%
  \endgroup
}
\newcommand{\RR}{\mathbb{R}} %
\newcommand{\id}{\mathrm{id}}           %
\let\on=\operatorname
\newcommand{\Diff}{\mathrm{Diff}}       %
\newcommand{\biLip}{\mathrm{biLip}}
\def\vp{\varphi}
\let\on=\operatorname
\def\Diff{\on{Diff}}
\newcommand{\frakg}{\mathfrak{g}}
\author{Martin Bauer\textsuperscript{*}}
\address{Martin Bauer: Department of Mathematics, Florida State University}
\email{bauer@math.fsu.edu}
\author{Yuxiu Lu }
\address{Yuxiu Lu: Department of Mathematics, Florida State University}
\email{yl18f@my.fsu.edu }
\author{Cy Maor}
\address{Cy Maor: Einstein Institute of Mathematics, The Hebrew University of Jerusalem}
\email{cy.maor@mail.huji.ac.il}
\thanks{\noindent * Corresponding author\\ M.~Bauer was partially supported by NSF-grants 1912037 and 
1953244. Y.~Lu was partially supported by NSF-grant 1912037. 
C.~Maor was partially supported by ISF-grant 1269/19.}
\subjclass[2020]{58B20, 58D05, 53C60, 35Q35}
\begin{document}

\title[Generalized Proudman--Johnson and $r$-Hunter--Saxton equations]{A geometric view on the generalized Proudman--Johnson and $r$-Hunter--Saxton equations}

%
%

%
\begin{abstract}
We show that two families of equations on the real line, the generalized inviscid Proudman--Johnson equation, and the $r$-Hunter--Saxton equation (recently introduced by Cotter et al.) coincide for a certain range of parameters.
This gives a new geometric interpretation of these Proudman--Johnson equations as geodesic equations of right invariant homogeneous $W^{1,r}$-Finsler metrics on an appropriate diffeomorphism group on $\mathbb{R}$.
Generalizing a construction of Lenells for the Hunter--Saxton equation, we analyze the $r$-Hunter--Saxton equation using an isometry from the diffeomorphism group to an appropriate subset of real-valued functions. Thereby we show that the periodic case is equivalent to the geodesic equation on the $L^r$-sphere in the space of functions, and the non-periodic case is equivalent to a geodesic flow on a flat space.
This allows us to give explicit solutions to these equations in the non-periodic case, and answer several questions of Cotter et al.\ regarding their limiting behavior.
\end{abstract}

\maketitle

\setcounter{tocdepth}{1}
\tableofcontents

\blfootnote{This is a corrected version of the published version~\cite{bauer2022geometric}. 
In it, we study two families of equations: the generalized inviscid Proudman--Johnson equation, and the $r$-Hunter--Saxton equation, both on the real line and the circle. In the published version we claim that they are equivalent in both cases.  
This statement is wrong, as the equations are only equivalent when considered on the real line. 
This updated version is the corrected one. }

\section{Introduction and main results}
In his seminal article~\cite{arnold1966geometrie} Arnold found a geometric interpretation of the incompressible Euler equation as the geodesic equation of a right-invariant Riemannian metric on the group of diffeomorphisms. Since then an analogues geometric picture has been constructed for several other equations in mathematical hydrodynamics and 
equations that admit such an interpretation are referred to as Euler--Arnold equations. Examples include the Camassa--Holm~\cite{camassa1993integrable,misiolek1998shallow,kouranbaeva1999camassa}, the Hunter--Saxton~\cite{hunter1991dynamics,lenells2007hunter}, the Burgers, the KdV~\cite{ovsienko1987korteweg}, or the modified Constantin--Lax--Majda equation \cite{constantin1985simple,wunsch2010geodesic,escher11geometry}, see also~\cite{vizman2008geodesic,arnold1999topological,bauer2014overview} for further examples. 

In this article we study two families of equations.
The first one is the $r$-Hunter--Saxton equation (henceforth $r$-HS), recently introduced by by Cotter et al.~\cite{cotter2020r}:
\[
 \left(u_x |u_x|^{r-2}\right)_{xt}+\left(u_x |u_x|^{r-2}\right)_{x}u_x+\left((u_x |u_x|^{r-2})_{x}u\right)_x=0.
\]
This one-parameter family of PDEs is derived as the geodesic equation, in Eulerian coordinates, of the right-invariant, homogenous $W^{1,r}$--Finsler metric (henceforth the $\dot W^{1,r}$ metric). To emphasize the relation to Euler--Arnold equations, which are defined as geodesic equations of a right-invariant Riemannian metrics, we will refer such equations as Finsler--Euler--Arnold equations.

The second family is the inviscid, generalized Proudman--Johnson equations with parameter $\lambda$ (henceforth $\lambda$-PJ equation), which are given by
\begin{equation}
u_{txx} + (1+2\lambda)u_xu_{xx} +uu_{xxx} = 0, 
\end{equation}
where $u$ is either a function on the real line (non-periodic $\lambda$-PJ) or on the circle (periodic $\lambda$-PJ).
The original Proudman--Johnson equation, in which $\lambda = -1$, corresponds to axisymmetric Navier--Stokes equations in $\RR^2$; 
the generalized equations, which were first proposed in \cite{okamoto2000some}, contain several other important special cases, in particular the Hunter--Saxton equation for $\lambda=1/2$, the $\mu$-Burgers equation $\lambda=1$, and self-similar axisymmetric Navier--Stokes equations in higher dimensions.
See \cite{Wun11} for further information about the equation and its motivation.

In the article~\cite{LM14} Lenells and Misio{\l}ek constructed a geometric interpretation of the $\lambda$-PJ equations for $\lambda \in [0,1]$ by interpreting them as geodesic equations of an affine connection $\nabla^{\alpha}$ on the homogenous space of all diffeomorphism of the circle modulo the group of rotations (in their notation $\alpha = 1-2\lambda$).\footnote{See also \cite{escher2011degasperis} for a similar interpretation of the $b$-equations, which include the Camassa--Holm and the Degasperis--Procesi equation.} 
The connection reduces to a Levi-Civita connection of the homogenous $W^{1,2}$ Riemannian metric for $\lambda=1/2$. In this case this recasts the geometric interpretation of the Hunter--Saxton equation as an Euler--Arnold equation, as described in \cite{lenells2007hunter,lenells2008hunter}.

\subsection*{Contributions of the article}

The starting point of our investigations is the observation that in the non-periodic case (i.e., on $\mathbb{R}$) the family of $r$-HS equations is formally equivalent to the $\lambda$-PJ equations for any $\lambda\in (0,1)$, with $\lambda = 1/r$. Using the derivation of the $r$-HS equation as geodesic equation of a right-invariant Finsler metric, one has thus found an interpretation of the $\lambda$-PJ equations as Finsler--Euler--Arnold equations and thereby complemented the geometric picture of Lenells and Misio{\l}ek~\cite{LM14}. This observation positively answers a recent question by Gibilisco as proposed in~\cite[Problem 5]{Gib20} in the non-periodic case.

Using this geometric picture, as geodesic equation of the right-invariant, $\dot W^{1,r}$-metric, allows one to investigate the properties of the $\lambda$-PJ equations by studying the geometry of the corresponding infinite dimensional group. 
In particular, we extend a construction, originally found by Lenells for the periodic Hunter--Saxton equation~\cite{lenells2007hunter}, and later extended to the non-periodic case in \cite{bauer2014homogeneous}, to the whole family of  $\dot W^{1,r}$-metric, for $1\leq r<\infty$.
This construction, which isometrically maps the diffeomorphism group to a (subset of a) vector space of functions, implies the following: 
\begin{itemize}
\item The non-periodic-$r$-HS equation (and correspondingly $\lambda$-PJ) is equivalent to geodesics with respect to the $L^r$ norm in an open and convex subset of $W^{\infty,1}(\mathbb R)={\bigcap}_{k\in \mathbb{N}} W^{k,1}(\mathbb R)$ (Section~\ref{sec:transformation_non_per}).
\item The periodic-$r$-HS equation is equivalent to geodesics on the $L^r$-sphere on $C^\infty(S^1)$ (Section~\ref{sec:transformation_per}).
\end{itemize}
In this article we mainly focus in the non-periodic case, where 
this equivalence turns out to be utmost advantageous for studying the equations. Since $W^{\infty,1}(\RR)$ (with the $L^r$-norm) is a vector space, geodesics in it are given by straight lines; from this we obtain explicit formulas for the solutions of the non-periodic-$\lambda$-PJ equation for any $\lambda\in (0,1)$, see Section~\ref{sec:sol_formula}.
While this discussion is done in smooth settings, we present in Section~\ref{sec:lowregularity_nonperiodic}, using a Lagrangian approach, a functional setting for discussing these equations in low regularity, namely for integrable, Lipschitz velocities $u$. 
In particular, this gives a rigorous interpretation of the piecewise-linear solutions studied in Cotter et al.~\cite{cotter2020r}, and shows that the space of piecewise-linear velocities is totally geodesic.
We use this geometric approach to easily retrieve some of the results of Cotter et al.~\cite{cotter2020r}, such as blow-up time of solutions (Section~\ref{sec:blowup}), as well as answer several questions stated in \cite{cotter2020r} regarding the limiting behavior of solutions as $r\to \infty$ ($\lambda\to 0$, resp.): we show that the solutions converge, as $r\to \infty$, to a solution of the $\lambda$-PJ equation for $\lambda=0$, and that this solution preserves the $\dot W^{1,\infty}$-norm of the velocity along the flow (Section~\ref{sec:r_to_infty}).
We also show existence of the boundary-value problem in Lagrangian coordinates (Section~\ref{sec:bvp}).
Finally, in Section~\ref{sec:r<1}, we show that large parts of our analysis continue to hold for $\lambda \notin [0,1]$ ($r<1$, resp.): while the geometric interpretation of the PDEs as Finsler--Euler--Arnold equations breaks down, the solution formula is still valid for this wider range of parameters.

The periodic case is considered in Section~\ref{sec:periodic}.
There, the transformation to the $L^r$-sphere does not immediately yields explicit solutions (when $r\ne 2$), but, similar to the non-periodic case, it provides a framework for discussing these equations in lower regularity (Lipschitz velocities).
A similar transformation for the periodic $\lambda$-PJ equation was already used in the analysis of blow-up of the solutions by Sarria and Saxton \cite{SS13}. 
Our contribution unveils the geometry behind this transformation for the $r$-HS euqation.
In a recent paper by Kogelbauer~\cite{Kog20}, explicit formulae for the flow map of the $\lambda$-PJ equation, up to the solution of an ODE, were obtained; it will be interesting to relate these solutions to the geometric picture on the sphere.

We note that for the non-periodic case our construction directly provides a change of coordinates that linearizes the flow of the $\lambda$-PJ equation. 
In the spirit of \cite{LM14}, this can be interpreted as an analogue of the inverse scattering transform formalism, and thus the integrability of these equations for any $\lambda\in (0,1)$ follows, and in fact to any $\lambda \in \RR$ in view of the results of Section~\ref{sec:r<1}.
To the best of our knowledge, integrability of the $\lambda$-PJ equations was previously only known for the case $\lambda=1/2$ (the Hunter--Saxton equation), $\lambda=1$ (the Burger's equation) and $\lambda =0$.\footnote{For $\lambda=0$ the integrability of the periodic $\lambda$-PJ equation is shown in the article~\cite{LM14}. This proof translates directly to the non-periodic case.}
Our results therefore confirm the analogue of~\cite[Problem 6]{Gib20} for the non-periodic $\lambda$-PJ equation.

\subsection*{Acknowledgements}
The authors are grateful to S.\ Preston 
and G.\ Misio{\l}ek 
for various discussions during the preparation of the manuscript.

\section{The non-periodic case}
In this section we will study the geometric picture for the non-periodic $r$-Hunter--Saxton equation (generalized Proudman--Johnson equation resp.), as the Finsler--Euler--Arnold equation on a group of diffeomorphisms equipped with a right-invariant Finsler metric. 
The unboundedness of $\mathbb R$ will require us to specify appropriate decay conditions for the elements of the diffeomorphisms group, which we introduce below in Section~\ref{sec:diffR}. 
We then show that the $r$-HS equation is equivalent to the $\lambda$-PJ equation, for $\lambda= \frac{1}{r} \in (0,1)$.
We then proceed to the above-mentioned simplifying transformation, which
allows us to obtain explicit formulas for the solution of the geodesic equation, that in turn allows us to positively answer several questions and conjectures of Cotter et al.~\cite{cotter2020r} and Gibilisco~\cite{Gib20}.

\subsection{Diffeomorphisms on the circle and the $r$-HS ($\lambda$-PJ) equation}\label{sec:diffR}
We start by introducing an appropriate functional setting for studying the $r$-Hunter--Saxton equation on the real line. 
To this end, we need to identify a group of smooth, orientation preserving diffeomorphims on the real line, on which the flow will be defined.
Note that the group of all smooth, orientation preserving diffeomorphisms of the real line is not an open subset of the space $C^{\infty}(\mathbb R,\mathbb R)$ and consequently it is not a smooth Fr\'echet manifold. 
To overcome this difficulty one usually only consider diffeomorphisms that satisfy certain decay conditions, which then allows to retain a manifold structure for the corresponding space. 
While several different types of decay conditions have been considered in the literature~\cite{michor2013zoo,kriegl2015exotic}, we will restrict our analysis in this paper to groups related to the function space $W^{\infty,1}(\mathbb R)$. 
Specifically, we will consider the following diffeomorphism group:
\[
\Diff_{-\infty}(\mathbb R)=\left\{\varphi=\id+f: f'\in W^{\infty,1}(\mathbb R),\; f'>-1,\text{ and } \lim_{x\to -\infty} f(x)=0 \right\},
\]
where $W^{\infty,1}(\mathbb R)={\bigcap} W^{k,1}(\mathbb R)$ is defined as the intersection of all Sobolev spaces of order $k\geq 0$.
It has been shown in~\cite{bauer2014homogeneous} that this space is a smooth Fr\'echet Lie-groups with Lie-algebra:
\[
\mathfrak g_{-\infty}=\left\{u: u'\in W^{\infty,1}(\mathbb R)\text{ and } \lim_{x\to -\infty} u(x)=0 \right\}.
\]
The reason for working with this group and not the smaller, more commonly known one
\[
\begin{split}
&\Diff(\mathbb R) :=\left\{\varphi=\id+f: f\in W^{\infty,1}(\mathbb R)\text{ and } f'>-1 \right\} \\
	&\qquad =\left\{\varphi=\id+f: f'\in W^{\infty,1}(\mathbb R), f'>-1, \lim_{x\to \infty} f(x) =  \lim_{x\to -\infty} f(x)=0  \right\}
\end{split}
\]
will be clear from Theorem~\ref{thm:nonperiodicHS} below, where we show that the $r$-Hunter--Saxton equation is not consistent with two-sided decay conditions on $f$, and thus the geodesic equation is not even locally well-defined on $\Diff(\mathbb R)$.
Its well-definiteness on $\Diff_{-\infty}(\RR)$ will follow from Theorem~\ref{thm:geodesicsDiffR}.

We now define the right-invariant $\dot W^{1,r}$-Finsler metric.
To this end, we write any tangent vector $h\in T_\varphi \Diff_{-\infty}(\mathbb R)$ as $X\circ\varphi$ with 
$X\in \mathfrak g_{-\infty}$.
This allows us to 
define the right-invariant, homogenous, $W^{1,r}$-Finsler metric on 
$\Diff_{-\infty}(\mathbb R)$ via
\begin{equation}\label{eq:W1rnorm:R}
F_{\varphi}(h)=\left(\int_{\mathbb R} |(h\circ\varphi^{-1})'|^r dx\right)^{1/r}=
 \left(\int_{\mathbb R} |X'|^r dx\right)^{1/r}\;.
\end{equation}

Note that for any $r\geq 1$ we have that $\mathfrak g_{-\infty}\subset \dot W^{1,r}$,
and thus equation~\eqref{eq:W1rnorm:R} is well-defined. 
Furthermore, we remark that the Finsler norm is non-degenerate on this space, as the only constant vector fields in 
$\mathfrak g_{-\infty}$ are the zero vector fields.

The $r$-HS equation has been recently derived by Cotter et al.~\cite{cotter2020r} as the Finsler--Euler--Arnold equation on the group of diffeomorphism with respect to the Finsler metric $F$. 
As the following theorem shows, in these non-compact setting one has to pay careful attention to choose the appropriate decay conditions (c.f.~\cite{bauer2014homogeneous} for the case $r=2$). 
As a byproduct we will observe the equivalence of the $\lambda$-PJ and the $r$-HS equations.
\begin{thm}\label{thm:nonperiodicHS}
For $r\in(1,\infty)$, the geodesic equation of the right-invariant Finsler metric $F$ on the Lie group $\Diff_{-\infty}(\mathbb R)$ 
is given by
\begin{equation}\label{eq:$r$-HS_Lagrange}
r \frac{d}{dt} \left( \frac{\varphi_{tx}}{\varphi_x} \left|\frac{\varphi_{tx}}{\varphi_x}\right|^{r-2}\right)+(r-1)\left|\frac{\varphi_{tx}}{\varphi_x}\right|^r=0.
\end{equation}
The corresponding Finsler--Euler--Arnold equation --- the geodesic equation in Eulerian coordinates $u=\varphi_t\circ\varphi^{-1}$ ---
 is the $r$-Hunter--Saxton equation:
\begin{equation} \label{eq:$r$-HS_Euler}
 \left(u_x |u_x|^{r-2}\right)_{xt}+\left(u_x |u_x|^{r-2}\right)_{x}u_x+\left((u_x |u_x|^{r-2})_{x}u\right)_x=0.
\end{equation}
This equation is formally equivalent to the non-periodic $\lambda$-PJ equations 
\begin{equation}\label{gPJequation}
u_{txx} + (1+2\lambda)u_xu_{xx} +uu_{xxx} = 0,\quad \lambda = r^{-1}\in (0,1),
\end{equation}
in the sense that \eqref{gPJequation} is obtained from \eqref{eq:$r$-HS_Euler} by integrating, dividing by $|u_x|^{r-2}$ and differentiating.

On the smaller group $\Diff(\mathbb R)$  both the geodesic equation and  the Finsler--Euler--Arnold equation do not exist. 
\end{thm}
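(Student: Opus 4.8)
The plan is to establish the four assertions in sequence; the variational derivation of the Lagrangian equation does the real work, and its boundary terms are exactly what separates the two groups. First I would realize geodesics as critical points of the $r$-energy $E(\varphi)=\int_0^1 F_\varphi(\varphi_t)^r\,dt$; by the degree-one homogeneity of $F$, Jensen's inequality shows these are precisely the constant-speed geodesics of the metric. Writing $a:=\varphi_{tx}/\varphi_x$ and changing variables $y=\varphi(t,x)$ in \eqref{eq:W1rnorm:R} gives
\begin{equation*}
E(\varphi)=\int_0^1\!\!\int_{\RR}\Abs{a}^{r}\,\varphi_x\,dx\,dt.
\end{equation*}
For a variation $\varphi\mapsto\varphi+s\eta$ one computes $\delta a=(\eta_{tx}-a\,\eta_x)/\varphi_x$, whence $\delta\!\brk{\Abs{a}^{r}\varphi_x}=r\Abs{a}^{r-2}a\,\eta_{tx}-(r-1)\Abs{a}^{r}\eta_x$. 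Integrating by parts in $t$ (fixed endpoints) and then in $x$ yields
\begin{equation*}
\partial_x\!\Brk{\,r\tfrac{d}{dt}\!\brk{\Abs{a}^{r-2}a}+(r-1)\Abs{a}^{r}}=0.
\end{equation*}
The one-sided decay defining $\Diff_{-\infty}(\RR)$ enters here: since $a=u_x\circ\varphi$ with $u_x\in W^{\infty,1}(\RR)$, the bracket and its variation decay as $x\to-\infty$, so the $x$-boundary terms vanish and the $x$-independent integration constant is forced to be $0$. This is exactly \eqref{eq:$r$-HS_Lagrange}.

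Next, pushing \eqref{eq:$r$-HS_Lagrange} forward to $u=\varphi_t\circ\varphi^{-1}$ through $\Abs{a}^{r-2}a=p\circ\varphi$ with $p:=\Abs{u_x}^{r-2}u_x$, and $\tfrac{d}{dt}(p\circ\varphi)=\brk{p_t+u p_x}\circ\varphi$, produces the once-integrated equation
\begin{equation}\label{eq:once-int-plan}
r\brk{p_t+u p_x}+(r-1)\Abs{u_x}^{r}=0.
\end{equation}
Differentiating \eqref{eq:once-int-plan} in $x$ and inserting the identity $u_x p_x=(r-1)p\,u_{xx}$ (immediate from $p_x=(r-1)\Abs{u_x}^{r-2}u_{xx}$) converts it into $p_{xt}+2u_x p_x+u p_{xx}=0$, which is the $r$-HS equation \eqref{eq:$r$-HS_Euler}; conversely, integrating \eqref{eq:$r$-HS_Euler} once in $x$, with the constant fixed by decay, returns \eqref{eq:once-int-plan}. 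To reach \eqref{gPJequation} I would factor $\brk{r-1}\Abs{u_x}^{r-2}$ out of \eqref{eq:once-int-plan} and divide by it, leaving $r\brk{u_{xt}+u u_{xx}}+u_x^2=0$, and then differentiate once more in $x$ to obtain $u_{txx}+\brk{1+2/r}u_x u_{xx}+u u_{xxx}=0$, i.e.\ \eqref{gPJequation} with $\lambda=1/r$. This is precisely the stated ``integrate, divide by $\Abs{u_x}^{r-2}$, differentiate'' procedure, the word \emph{formally} absorbing the sign of $\Abs{u_x}^{r-2}$ at zeros of $u_x$ and the choice of integration constant.

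Finally, on $\Diff(\RR)$ I would show directly that the geodesic spray fails to be tangent to the group. For an initial velocity $u(0)=v_0\in T_{\id}\Diff(\RR)$, which now decays at \emph{both} ends, the relation $r\brk{u_{xt}+u u_{xx}}+u_x^2=0$ gives at $t=0$ that $u_{xt}(0)=-v_0 v_{0,xx}-\tfrac1r v_{0,x}^2$. Integrating over $\RR$ and integrating $v_0 v_{0,xx}$ by parts (boundary contributions vanish by two-sided decay) leaves
\begin{equation*}
\int_{\RR}u_{xt}(0)\,dx=\frac{r-1}{r}\int_{\RR}v_{0,x}^2\,dx\neq0\qquad(v_0\not\equiv0).
\end{equation*}
Hence any antiderivative $u_t(0)=\int^{x}u_{xt}(0)$ has distinct limits at $\pm\infty$ and cannot lie in $\mathfrak{g}$; equivalently the Lagrangian acceleration $\varphi_{tt}(0,x)=\tfrac{r-1}{r}\int_{-\infty}^{x}v_{0,x}^2\,d\xi$ tends to the nonzero constant $\tfrac{r-1}{r}\norm{v_{0,x}}_{L^2}^2$ as $x\to+\infty$, so it is not tangent to $\Diff(\RR)$. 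Thus neither the Lagrangian nor the Eulerian geodesic equation closes up on $\Diff(\RR)$, while on $\Diff_{-\infty}(\RR)$ only the harmless normalization at $-\infty$ is imposed and no such obstruction appears.

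The main obstacle is the variational step on this non-compact, infinite-dimensional Finsler manifold: the integration-by-parts boundary terms are not a technicality but carry the content of the theorem. The very integral $\tfrac{r-1}{r}\int_{\RR}v_{0,x}^2$ that the one-sided condition of $\Diff_{-\infty}(\RR)$ normalizes to zero is exactly the quantity obstructing tangency on $\Diff(\RR)$, so the derivation of \eqref{eq:$r$-HS_Lagrange} and the non-existence claim are two faces of one computation and must be handled together, with care about which decay is genuinely available.
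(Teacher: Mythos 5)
Your proposal is correct and takes essentially the same route as the paper: critical points of the $r$-energy $\int_0^1\int_{\mathbb R}\left|\varphi_{tx}/\varphi_x\right|^r\varphi_x\,dx\,dt$, the same variational computation yielding the Lagrangian equation, passage to the once-integrated Eulerian equation $r|u_x|^{r-2}(u_{tx}+uu_{xx})+|u_x|^r=0$ from which both the $\lambda$-PJ and $r$-HS forms follow, and non-existence on $\Diff(\mathbb R)$ via the non-decaying nonlocal term $\tfrac{r-1}{r}\int_{-\infty}^x u_x^2\,dz$. If anything you are slightly more explicit than the paper, which reads off the geodesic equation without discussing the $x$-boundary terms and integration constant, and which cites \cite{cotter2020r} for the derivative computation that you verify directly via $u_xp_x=(r-1)pu_{xx}$.
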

\begin{proof}
The length functional of a Finsler-metric $F$ on a (possibly infinite dimensional) manifold $\mathcal M$ is defined as
\begin{equation}
L(\varphi)=\int_0^1 F_{\varphi}(\varphi_t) dt,   
\end{equation}
where $\varphi: [0,1]\to \mathcal M$ is a path in the manifold and where $\varphi_t$ denotes its derivative.
A geodesic is a path that locally minimize the length functional; since $L$ is invariant to reparametrization, we can restrict ourselves to paths of constant speed.
By H\"older inequality, it is immediate that constant speed geodesics are exactly the local minimizers of the $q$-energy 
\begin{equation*}
E_q(\varphi)=\int_0^1 F^q_{\varphi}(\varphi_t) dt,   
\end{equation*}
for any $q> 1$.
In our case, for the $\dot W^{1,r}$-Finsler metric the most convenient choice is to consider the $q$-Energy with $q=r$. 
This leads us to the same Lagrangian as in~\cite{cotter2020r}:
\begin{equation} 
E_r(\varphi)=\int_0^1 \int_{\mathbb R}\left|\frac{\varphi_{tx}}{\varphi_x}\right|^r \varphi_x dx dt.
\end{equation}
Calculating the variation of $E_r$ in direction $\delta \varphi$ and using integration by parts one obtains
\begin{equation}\label{eq:deltaE}
\begin{split}
&\delta E_r(\varphi)(\delta \varphi)=\int_0 ^1 \int_{\mathbb R}   r\left|\frac{\varphi_{tx}}{\varphi_x}\right|^{r-2}\frac{\varphi_{tx}}{\varphi_x}\delta \varphi_{tx}-(r-1)\left|\frac{\varphi_{tx}}{\varphi_x}\right|^r  \delta \varphi_x  \ dxdt \\
&\qquad=-\int_0^1 \int_{\mathbb R}  \left( r\frac{d}{dt}\left(\left|\frac{\varphi_{tx}}{\varphi_x}\right|^{r-2}\frac{\varphi_{tx}}{\varphi_x}\right)+(r-1)\left|\frac{\varphi_{tx}}{\varphi_x}\right|^r \right) \  \delta \varphi_x  \ dxdt.
\end{split}
\end{equation}
Thus we can read off the geodesic equation:
\begin{equation}\label{eq:ELeq}
r \frac{d}{dt} \left( \frac{\varphi_{tx}}{\varphi_x} \left|\frac{\varphi_{tx}}{\varphi_x}\right|^{r-2}\right)+(r-1)\left|\frac{\varphi_{tx}}{\varphi_x}\right|^r=0.
\end{equation}
A straight-forward calculation shows that, in Eulerian coordinate $u=\varphi_t\circ\varphi^{-1}$,  this equation 
reduces to: 
\begin{equation}\label{eq:integrated-HS}
r|u_x|^{r-2}(u_{tx}+u_{xx}u)+|u_x|^r=0.
\end{equation}
Now, one obtains the $\lambda$-PJ equation~\eqref{gPJequation} by dividing by $|u_x|^{r-2}$ and differentiating in $x$, and choosing $\lambda = 1/r$.
By differentiating \eqref{eq:integrated-HS}, one gets equation~\eqref{eq:$r$-HS_Euler}, as shown in \cite[Proposition~2.2]{cotter2020r}.
%

To see that these equations do not exist on the smaller group $\Diff(\mathbb R)$ we divide equation~\eqref{eq:integrated-HS} by $|u_x|^{r-2}$ and integrate it again in the variable $x$ to obtain the formally equivalent equation
\begin{equation} \label{eq:$r$-HS_Euler_int2}
u_{t}=-uu_x+(1-\frac{1}{r})\int_{-\infty}^x u^2_x(t,z)dz\;.
\end{equation}
Note, that the constant of integration is zero, due to the decay assumptions on the the vector fields $u$. Now the non-existence follows as
for any non-trivial initial conditions  $u_0\in W^{\infty,1}(\mathbb R)$ (the Lie-algebra of $\Diff(\RR)$), 
the term $\int_{-\infty}^x (u_0)^2_x(z)dz$ dominates $u_0(u_0)_x$ for $x$ large enough.
This implies that $u_t(0,x)$ does not decay as $x\to \infty$, hence that the corresponding solution $u(t,x)\notin W^{\infty,1}(\mathbb R)$ for any $t>0$ 
(see also~\cite{bauer2014homogeneous} where this phenomenon is described for the case $r=2$). 
Alternatively, this can be seen from the solution formula as presented in Theorem~\ref{thm:geodesicsDiffR} below.
\end{proof}

\begin{rem}
In the periodic case, a similar derivation can be made, with $S^1$ instead of $\mathbb{R}$.
In this case, however $\delta \varphi_x$ is not an arbitrary function, but rather a function of zero average.
Thus, formula \eqref{eq:deltaE} implies equation \eqref{eq:ELeq} with a (non-zero) function of time on the righthand side.
The $r$-HS equation \eqref{eq:$r$-HS_Euler} follows the same way, but the $\lambda$-PJ equation \eqref{gPJequation} does not.
\end{rem}

\begin{rem}
Note that $F$ is only a weak Finsler metric, as the
the $W^{1,r}$ topology is weaker than the original $C^{\infty}$-manifold topology. 
As a consequence several results of finite dimensional Riemannian geometry do not hold in this setting. In particular, it is
not guaranteed that the geodesic distance function defines a true metric, as it can be degenerate or even vanish identically.
 As a byproduct of the analysis in the following sections, we will obtain an explicit formula for this distance function.
This will in particular imply that the geodesic distance of the $\dot W^{1,r}$-metric does not admit this misbehavior and is indeed inducing a
true distance function.
\end{rem}

\subsection{An isometry to a flat space}\label{sec:transformation_non_per}
In this section we introduce an isometry that will map the diffeomorphism group with the right-invariant $\dot W^{1,r}$-Finsler metric to an open subset of a vector space. 
As mentioned in the introduction, this construction, which is a generalization of \cite{lenells2007hunter,lenells2008hunter,bauer2014homogeneous}
for the $r=2$ case, will allow us to obtain explicit formulas for solutions to the geodesic equation on the diffeomorphism group and consequently also for the $r$-HS ($\lambda$-PJ, resp.) equation:
 \begin{thm}\label{thm:isometry:nonperiodic}
For $r\in [1,\infty)$, the mapping
\begin{equation}
\Phi: \begin{cases}
\left(\Diff_{-\infty}(\mathbb R), F\right)  	&\to  \left(W^{\infty,1}(\mathbb R),L^r\right)\\ 
\varphi 						&\mapsto r\left(\varphi_x^{\frac{1}{r}}-1\right)
\end{cases}
\end{equation}
is an isometric embedding. Furthermore, the image $\mathcal U=\Phi(\Diff_{-\infty}(\mathbb R))$  is 
the set of all positive functions in $W^{\infty,1}(\mathbb R)$, i.e.,
\begin{equation}\label{eq:nonperiodic:U}
\mathcal U=\{f \in W^{\infty,1}(\mathbb R):f>-r\}.
\end{equation}
The inverse of $\Phi$ is given by
\begin{equation}\label{eq:Phiinverse:nonperiodic}
\Phi^{-1}: \begin{cases}
W^{\infty,1}(\mathbb R)  &\to  \Diff_{-\infty}(\mathbb R) \\ f &\mapsto x+\int_{-\infty}^x \left( \left(\frac{f(\tilde x)}{r}+1\right)^r-1\right)d\tilde x.
\end{cases}
\end{equation}
 \end{thm}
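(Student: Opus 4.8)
The plan is to establish the statement in three stages --- well-definedness of $\Phi$ as a map into $\mathcal U$, bijectivity with the asserted inverse, and the metric-preservation property --- and to front-load the one nonelementary ingredient, a closure property of $W^{\infty,1}(\RR)$ under nonlinear superposition. Once that is available, the inverse identities are pure algebra and the isometry reduces to a single change of variables.

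\emph{The superposition lemma.} First I would record the following: if $\psi\in C^\infty(I)$ with $\psi(0)=0$, and $g\in W^{\infty,1}(\RR)$ has range contained in a compact subinterval of the open interval $I$, then $\psi\circ g\in W^{\infty,1}(\RR)$. The point is that $W^{\infty,1}(\RR)\subset C_0(\RR)$, so every $g\in W^{\infty,1}(\RR)$ vanishes at $\pm\infty$ together with all its derivatives; in particular its range is compact, and a pointwise strict bound such as $g>-1$ (resp.\ $g>-r$) upgrades to $\inf g>-1$ (resp.\ $>-r$). For $k\ge 1$, Fa\`a di Bruno writes $(\psi\circ g)^{(k)}$ as a finite sum of terms $\psi^{(m)}(g)\prod_i g^{(j_i)}$ in which at least one factor $g^{(j_i)}$ with $j_i\ge 1$ occurs; that factor lies in $L^1\cap L^\infty$, while all remaining factors, including $\psi^{(m)}(g)$ (bounded since $\psi^{(m)}$ is continuous on the compact range of $g$), lie in $L^\infty$, so each term is in $L^1$. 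Finally $\psi\circ g\in L^1$ because $|\psi\circ g|\le (\sup_{[\inf g,\sup g]}|\psi'|)\,|g|$, using $\psi(0)=0$. Applying the same estimates to difference quotients gives smoothness of the induced superposition operator between the relevant Fr\'echet spaces, which supplies the ``embedding'' half of the claim; for $r=2$ this is essentially \cite{bauer2014homogeneous}.

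\emph{Well-definedness and bijectivity.} Writing $\varphi=\id+f$ we have $\varphi_x=1+f'>0$, and $\Phi(\varphi)=r\bigl((1+f')^{1/r}-1\bigr)=\psi\circ f'$ for $\psi(t)=r((1+t)^{1/r}-1)$, which is smooth on $(-1,\infty)$ with $\psi(0)=0$; the lemma gives $\Phi(\varphi)\in W^{\infty,1}(\RR)$, while $\Phi(\varphi)>-r$ is immediate from $\varphi_x^{1/r}>0$, so $\Phi$ maps into $\mathcal U$. Conversely, given $f\in\mathcal U$, set $\chi(t)=(t/r+1)^r-1$, smooth on $(-r,\infty)$ with $\chi(0)=0$; the lemma yields $\chi\circ f\in W^{\infty,1}(\RR)$, and this is exactly $\partial_x\Phi^{-1}(f)-1$, which is $>-1$ and lies in $L^1$, so the integral defining $\Phi^{-1}(f)$ converges, tends to $0$ as $x\to-\infty$, and exhibits $\Phi^{-1}(f)\in\Diff_{-\infty}(\RR)$. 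That the two maps are mutually inverse is then direct: $\partial_x\Phi^{-1}(f)=(f/r+1)^r$ gives $\Phi(\Phi^{-1}(f))=r((f/r+1)-1)=f$, while $\int_{-\infty}^x(\varphi_x-1)=f(x)$ gives $\Phi^{-1}(\Phi(\varphi))=\varphi$; in particular the image is exactly $\mathcal U$.

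\emph{The isometry.} For $h\in T_\varphi\Diff_{-\infty}(\RR)$, I would differentiate $\Phi(\varphi_s)=r(\partial_x\varphi_s)^{1/r}-r$ along a curve $\varphi_s$ with $\partial_s\varphi_s|_0=h$ to get $d\Phi_\varphi(h)=\varphi_x^{1/r-1}h_x$, whence $\|d\Phi_\varphi(h)\|_{L^r}^r=\int_\RR \varphi_x^{1-r}|h_x|^r\,dx$. On the other side, with $h=X\circ\varphi$ one has $X'(\varphi(x))=h_x/\varphi_x$, so the substitution $y=\varphi(x)$, $dy=\varphi_x\,dx$, turns $F_\varphi(h)^r=\int_\RR|X'|^r\,dy$ into $\int_\RR |h_x/\varphi_x|^r\varphi_x\,dx=\int_\RR\varphi_x^{1-r}|h_x|^r\,dx$. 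The two integrals coincide, so $F_\varphi(h)=\|d\Phi_\varphi(h)\|_{L^r}$ for every $h$, valid already at $r=1$ where $\psi$ is the identity. The main obstacle throughout is the superposition lemma: one must be sure the genuinely nonlinear maps $t\mapsto(1+t)^{1/r}-1$ and $t\mapsto(1+t/r)^r-1$ preserve $W^{\infty,1}(\RR)$, and this rests precisely on the decay built into $W^{\infty,1}(\RR)$ confining the range of the argument to a compact subset of the domain of smoothness; everything after that is routine.
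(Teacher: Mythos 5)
Your proposal is correct and follows essentially the same route as the paper: the same computation of $D_{\varphi,h}\Phi=\varphi_x^{\frac{1}{r}-1}h_x$ together with the change of variables $y=\varphi(x)$ for the isometry, and the same algebraic verification of the inverse and the image. The only difference is that you carefully prove (via Fa\`a di Bruno and the decay of $W^{\infty,1}$ functions) the mapping property that the paper dismisses as ``a straightforward calculation,'' which is a welcome addition but not a change of method.
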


\begin{proof}
First, note that since $\varphi_x -1 \in W^{\infty,1}(\RR)$ and non-negative, a straightforward calculation shows that $\varphi_x^{1/r}-1$ is also in $W^{\infty,1}(\RR)$ (since $(1+\alpha)^{1/r} < 1 + \frac{1}{r}\alpha$).
Similarly, it is easy to see that the image of the inverse is indeed in $\Diff_{-\infty}(\RR)$.
Next, we calculate the variation formula of the mapping $\Phi$. We have:
\begin{align}
D_{\varphi,h}\Phi= {\varphi_x}^{\frac{1}{r}-1} h_x
\end{align}
and thus
\begin{align}
\|D_{\varphi,h}\Phi\|^r_{L^r}= \int_{\mathbb R} |{\varphi_x}^{\frac{1}{r}-1} h_x|^r dx=
\int_{\mathbb R} {\varphi_x}^{1-r} |h_x|^r dx=F_{\varphi}(h)^r.
\end{align}
It remains to prove the statement on the image.  Let $f= r({\varphi_x}^{\frac{1}{r}}-1)=\Phi(\varphi)$ for some 
$\varphi\in \Diff_{-\infty}(\mathbb R)$. Since elements of $\Diff_{-\infty}(\mathbb R)$
are orientation preserving diffeomorphisms, this implies that $\varphi_x>0$. Thus it follows that $f>-r$, which concludes the characterization of the image. The statement on the inverse follows by direct calculation. 
\end{proof}

\subsection{A solution formula for the $r$-HS ($\lambda$-PJ) equation}\label{sec:sol_formula}
As a consequence of Theorem~\ref{thm:isometry:nonperiodic} and the simple form of the image of $\Phi$ we obtain an explicit formula for geodesics on $\left( \Diff_{-\infty}(\mathbb R), F\right)$.
\begin{thm}\label{thm:geodesicsDiffR}
Let $r\in(1,\infty)$. Given initial conditions 
$$\varphi(0) = \operatorname{id}\in \Diff_{-\infty}(\mathbb R),\quad \varphi_t(0)=u_0\in  T_{\operatorname{id}}\Diff_{-\infty}(\mathbb R)=\mathfrak g_{-\infty}$$ 
the unique solution to the geodesic equation of $F$ is given by:
\begin{equation}\label{eq:geodesicDiffnonperiodic}
\varphi(t,x)=x + \int_{-\infty}^x \left(\left(\frac{tu_0'(y)}{r}+1\right)^r - 1 \right) dy.
\end{equation}
Consequently, the solution to the $r$-HS (equivalently, $\lambda$-PJ with $\lambda= r^{-1}$) equation with initial condition $u(0)=u_0\in \mathfrak g_{-\infty}$
is given by $u=\varphi(t,\varphi^{-1}(t,x))$ with $\varphi$ given by~\eqref{eq:geodesicDiffnonperiodic}.

In particular, this implies that the equation is well-defined on $\Diff_{-\infty}(\mathbb R)$, and that these solutions are length minimizing paths with respect to the distance function induced by the $\dot W^{1,r}$-metic.
\end{thm}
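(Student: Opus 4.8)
The plan is to solve the geodesic equation not on $\Diff_{-\infty}(\RR)$ directly, but to transport it through the isometry $\Phi$ of Theorem~\ref{thm:isometry:nonperiodic} to the flat space $(\mathcal U,L^r)$, solve it there (where geodesics are straight lines), and pull the explicit solution back. The first step is to record the transported data. Since $\Phi(\operatorname{id})=r(1^{1/r}-1)=0$ and, from the variation $D_{\varphi,h}\Phi=\varphi_x^{1/r-1}h_x$ computed in the proof of Theorem~\ref{thm:isometry:nonperiodic}, the pushforward of the initial velocity is $D_{\operatorname{id},u_0}\Phi=u_0'$, the image path $f(t):=\Phi(\varphi(t))$ must satisfy $f(0)=0$ and $\dot f(0)=u_0'\in W^{\infty,1}(\RR)$. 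Because $\Phi$ is a pointwise isometry, $\|\dot f(t)\|_{L^r}=F_{\varphi(t)}(\dot\varphi(t))$ for every path, so the energy $E_r$ on $\Diff_{-\infty}(\RR)$ equals the flat energy $f\mapsto\int_0^1\!\int_\RR|\partial_t f|^r\,dx\,dt$ on $\mathcal U$; since $\Phi$ is a bijection onto $\mathcal U$, geodesics (critical points of $E_r$) correspond bijectively under $\Phi$.

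In the flat picture the Euler--Lagrange equation of the $L^r$-energy is $\partial_t\left(|\partial_t f|^{r-2}\partial_t f\right)=0$ pointwise in $x$. As $s\mapsto|s|^{r-2}s$ is a strictly increasing bijection of $\RR$ for $r\in(1,\infty)$, this forces $\partial_t f$ to be constant in $t$, so the unique solution with $f(0)=0$, $\partial_t f(0)=u_0'$ is the straight line $f(t)=t\,u_0'$. Feeding $f=t\,u_0'$ into the explicit inverse \eqref{eq:Phiinverse:nonperiodic} reproduces exactly \eqref{eq:geodesicDiffnonperiodic}, and uniqueness transfers back through $\Phi$. Differentiating \eqref{eq:geodesicDiffnonperiodic} in $t$ gives $\varphi_t(t,x)=\int_{-\infty}^x u_0'(y)\left(\frac{t u_0'(y)}{r}+1\right)^{r-1}\,dy$, and the Eulerian velocity is then $u=\varphi_t\circ\varphi^{-1}$.

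It remains to address well-definedness and the minimizing property. The line $t\,u_0'$ stays in the open convex set $\mathcal U=\{f>-r\}$ precisely while $t\,u_0'(y)>-r$ for all $y$; since $u_0'\in W^{\infty,1}(\RR)$ is bounded, this holds on a maximal interval $[0,t^*)$ whose endpoint is the first time $t\,u_0'$ touches $-r$, recovering the blow-up analysis of Section~\ref{sec:blowup}. On this interval $\Phi^{-1}(t\,u_0')$ is a genuine smooth path in $\Diff_{-\infty}(\RR)$, which establishes well-definedness. For minimality I would invoke strict convexity of the $L^r$-norm ($r\in(1,\infty)$): between any two points of the convex set $\mathcal U$ the connecting segment is the \emph{unique} shortest path, so its $\Phi^{-1}$-image is length-minimizing in $(\Diff_{-\infty}(\RR),F)$ and the induced distance equals $\|\Phi(\varphi_0)-\Phi(\varphi_1)\|_{L^r}$. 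The main obstacle I anticipate is not any single computation but the bookkeeping around the open domain $\mathcal U$: one must check that the transported problem is genuinely flat on the convex set (so that straight lines are simultaneously the critical points \emph{and} the unique minimizers), that $t\,u_0'$ does not leave $\mathcal U$ on the relevant time interval, and that the regularity $t\,u_0'\in W^{\infty,1}(\RR)$ together with the positivity is preserved, so that the pullback truly lands in the Fréchet Lie group.
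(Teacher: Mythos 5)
Your proposal is correct and follows essentially the same route as the paper: transport the problem through the isometry $\Phi$ of Theorem~\ref{thm:isometry:nonperiodic} to the flat space $(\mathcal U,L^r)$, observe that geodesics there are straight lines, and pull $f(t)=t\,u_0'$ back via the explicit inverse \eqref{eq:Phiinverse:nonperiodic} to obtain \eqref{eq:geodesicDiffnonperiodic}, with minimality following from convexity of $\mathcal U$ and the fact that segments minimize in a normed space. The paper's proof is a terser version of exactly this argument; your added details (the pushforward of the initial data, the flat Euler--Lagrange equation, and the exit time from $\mathcal U$) are consistent with, and correctly supplement, what the paper leaves implicit.
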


\begin{rem}
As we will discuss in Section~\ref{sec:r<1} below, formula~\eqref{eq:geodesicDiffnonperiodic}, with $r=\lambda^{-1}$, provides a solution to the $\lambda$-PJ equation for every $\lambda\ne 0$; however for $\lambda\notin(0,1)$ we do not have an interpretation of the equation as a geodesic equation.
The case $\lambda = 0$, which is equivalent to $r=\infty$, is treated in Section~\ref{sec:r_to_infty}.
\end{rem}

\begin{proof}
By Theorem~\ref{thm:isometry:nonperiodic} the mapping $\Phi$ is an isometry from $\left( \Diff_{-\infty}(\mathbb R), F\right)$ to an open subset of the vector space $(W^{\infty,1}(\mathbb R),L^r)$. Thus geodesics on the former space 
are the pre-images of $\Phi$ of geodesics in the image, i.e., pre-images of straight lines and thus the formula follows directly from the inversion formula~\eqref{eq:Phiinverse:nonperiodic}.

Since straight lines are length minimizing in $(W^{\infty,1}(\mathbb R),L^r)$, so are their pre-images in $\Diff_{-\infty}(\mathbb R)$.
\end{proof}

\subsection{Geodesic incompleteness and blowup of the  $r$-HS ($\lambda$-PJ) equation}\label{sec:blowup}
As a direct consequence of  the geometric interpretation in Theorem~\ref{thm:geodesicsDiffR} we obtain the following result concerning
geodesic incompleteness of $\left(\Diff_{-\infty}(\mathbb R),F\right)$. This, in turn, implies blow-up for the $r$-HS ($\lambda$-PJ resp.) equation.
The following theorem is in correspondence with the blow-up result in~\cite[Theorem 3.3]{cotter2020r}.
\begin{cor}
Let $r\in(1,\infty)$. The space $\left(\Diff_{-\infty}(\mathbb R),F\right)$ is geodesically incomplete. More precisely, given any initial conditions 
$$\varphi(0) = \operatorname{id}\in\Diff_{-\infty}(\mathbb R),\quad \varphi_t(0)=u_0\in 
 T_{\operatorname{id}}\Diff_{-\infty}(\mathbb R)=\mathfrak g_{-\infty}$$ 
the geodesic as given by formula~\eqref{eq:geodesicDiffnonperiodic} exists for all time $t>0$ if and only if $u_0'(x)\geq 0$ for all $x\in \mathbb R$. 
If there exists a point $x_0$ with $u_0'(x_0)< 0$ then the geodesic only exists  for  finite time $T^*(u_0)$, with
\begin{equation}\label{eq:blowup}
T^{*}(u_0)
= -\frac{r}{\inf_{x\in \RR} u_0'(x)}
\end{equation}

At time $T^{*}(u_0)$ we have $\min_{x\in S^{1}}\varphi_x(T^{*}(u_0),x)=0$ and thus the corresponding solution $u=\varphi_t\circ\varphi^{-1}$ to the $r$-HS (equivalently, $\lambda$-PJ with $\lambda=r^{-1}$) equation  blows up at the same time.
\end{cor}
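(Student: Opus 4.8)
The plan is to read off the entire statement from the isometry $\Phi$ of Theorem~\ref{thm:isometry:nonperiodic} together with the explicit geodesic formula~\eqref{eq:geodesicDiffnonperiodic}, avoiding any direct analysis of the PDE. Since $\Phi$ is an isometry from $(\Diff_{-\infty}(\mathbb R),F)$ onto the convex set $\mathcal U=\{f\in W^{\infty,1}(\mathbb R):f>-r\}$ carrying geodesics to straight lines, and since $\Phi(\id)=0$ while $D_{\id,u_0}\Phi=u_0'$, the geodesic through $\id$ with velocity $u_0$ corresponds to the ray $t\mapsto t\,u_0'$ in $(W^{\infty,1}(\mathbb R),L^r)$. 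Consequently $\varphi(t)$ is a genuine element of $\Diff_{-\infty}(\mathbb R)$ exactly as long as this ray remains in $\mathcal U$, i.e.\ as long as $t\,u_0'(x)>-r$ for every $x\in\mathbb R$; this single inequality drives the whole corollary.

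First I would differentiate~\eqref{eq:geodesicDiffnonperiodic} in $x$ to obtain $\varphi_x(t,x)=\bigl(\tfrac{t}{r}u_0'(x)+1\bigr)^{r}$, so that the diffeomorphism condition $\varphi_x>0$ is literally the membership condition $t\,u_0'(x)>-r$ found above. The dichotomy is then immediate. If $u_0'\ge 0$ everywhere, then $t\,u_0'(x)\ge 0>-r$ for all $t>0$ and all $x$, and the geodesic exists for all positive time. If instead $u_0'(x_0)<0$ for some $x_0$, then $\inf_{x\in\mathbb R}u_0'<0$ and the constraint $t\,u_0'(x)>-r$ holds for all $x$ precisely when $t<-r/\inf_{x\in\mathbb R}u_0'$, yielding the blow-up time $T^*(u_0)=-r/\inf_{x\in\mathbb R}u_0'(x)$ as in~\eqref{eq:blowup}.

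Next I would show that at $t=T^*$ the map $\varphi$ degenerates, with $\min_{x}\varphi_x(T^*,x)=0$. Here I would invoke the Sobolev embedding $W^{1,1}(\mathbb R)\hookrightarrow C_0(\mathbb R)$: since $u_0'\in W^{\infty,1}(\mathbb R)$ it is continuous and tends to $0$ at $\pm\infty$, so a strictly negative infimum is attained at some finite point $x_0$, i.e.\ $u_0'(x_0)=\inf_{x}u_0'$. Substituting into $\varphi_x(t,x)=\bigl(\tfrac{t}{r}u_0'(x)+1\bigr)^{r}$ at $t=T^*$ gives $\varphi_x(T^*,x_0)=0$, and since $\tfrac{T^*}{r}u_0'(x)+1\ge 0$ for all $x$, this value is indeed the minimum.

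Finally I would upgrade the degeneracy of $\varphi$ to genuine blow-up of the Eulerian velocity $u=\varphi_t\circ\varphi^{-1}$. Differentiating $u\circ\varphi=\varphi_t$ in $x$ gives $u_x\circ\varphi=\varphi_{tx}/\varphi_x$, and with $\varphi_x=\bigl(\tfrac{t}{r}u_0'+1\bigr)^{r}$ one computes $u_x\circ\varphi=u_0'/\bigl(\tfrac{t}{r}u_0'+1\bigr)$. Evaluating at $x_0$ and letting $t\to T^{*-}$, the denominator tends to $0^+$ while the fixed numerator $u_0'(x_0)<0$, so $u_x\to-\infty$; this is the wave-breaking blow-up of the $r$-HS ($\lambda$-PJ) solution at time $T^*$. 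The only genuinely delicate point is this last distinction: by the isometry the $F$-speed $\|u_x\|_{L^r}=\|u_0'\|_{L^r}$ is \emph{conserved} along the geodesic, so what must be exhibited is not a loss of energy but the pointwise blow-up of the slope $u_x$, which in turn relies on the fact that the extremal value of $u_0'$ is actually attained, so that $\min_{x}\varphi_x$ truly reaches $0$ rather than merely approaching it.
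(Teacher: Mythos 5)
Your proposal is correct and follows essentially the same route as the paper: both read the blow-up criterion off the explicit formula $\varphi_x(t,x)=\bigl(\tfrac{t}{r}u_0'(x)+1\bigr)^{r}$ coming from the isometry and the solution formula~\eqref{eq:geodesicDiffnonperiodic}, and identify $T^*$ as the first time this expression can vanish. You merely supply details the paper leaves implicit --- in particular the attainment of $\inf u_0'$ via $W^{1,1}(\mathbb R)\hookrightarrow C_0(\mathbb R)$ and the explicit computation $u_x\circ\varphi=u_0'/\bigl(\tfrac{t}{r}u_0'+1\bigr)$ showing the pointwise blow-up of the slope --- which is a welcome sharpening rather than a different argument.
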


\begin{rem}
Note that this shows that every geodesics blows up either in finite positive or finite negative time.  
Furthermore, we see that the maximal existence time of geodesics 
grows linearly with the parameter $r$.
\end{rem}

\begin{proof}
To obtain this result, we only need to observe that a geodesics $\varphi(t,x)$ in $\left(\Diff_{-\infty}(\mathbb R),F\right)$ ceases to exists if and only if $\varphi_x(t,x)$ approaches zero (as there is no loss of regularity).
By the solution formula~\eqref{eq:geodesicDiffnonperiodic} this is equivalent to:
\begin{equation}
\left(\frac{tu_0'(x)}{r}+1\right)^r  =0,
\end{equation}
which directly leads to the desired statement.
Note that $\varphi_x(t,x)=0$
implies that $u=\varphi_t\circ \varphi^{-1}$ blows up.
\end{proof}

\subsection{The metric completion}
In this section we will calculate the metric completion. 
Note that the solution formula~\eqref{eq:geodesicDiffnonperiodic} is no longer well-defined on this larger space, as $u\in AC(\mathbb R)$ does not imply that $u'\in L^r(\mathbb R)$. 
We will study a slightly smaller functions space such that the solution formula is still well-defined later in Section~\ref{sec:lowregularity_nonperiodic}.

\begin{prop}
Let $r\in(1,\infty)$. The metric completion of $\left(\Diff_{-\infty}(\mathbb R),\operatorname{dist}^F\right)$ is the monoid of all absolutely contionous surjective maps:
\begin{equation*}
\operatorname{Mon}(\mathbb R)=\left\{\varphi\in AC(\mathbb R): \varphi \text{ is surj., }\varphi'\geq 0 \text{ a.e. and}  \lim_{x\to -\infty} (\varphi(x)-x)=0   \right\}.
\end{equation*}
\end{prop}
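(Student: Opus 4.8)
The plan is to use the isometry $\Phi$ from Theorem~\ref{thm:isometry:nonperiodic} to transport the completion problem to the vector space $(W^{\infty,1}(\RR), L^r)$, where it becomes the elementary question of computing the $L^r$-closure of the image $\mathcal U = \{f \in W^{\infty,1}(\RR) : f > -r\}$ inside the ambient $L^r$-space. Since an isometric embedding extends to an isometry of metric completions, the completion of $(\Diff_{-\infty}(\RR), \dist^F)$ is isometric to the $L^r$-closure $\overline{\mathcal U}^{L^r}$, and the task reduces to identifying this closure and then pulling it back through (the continuous extension of) $\Phi^{-1}$.

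First I would identify $\overline{\mathcal U}^{L^r}$. An $L^r$-Cauchy sequence $f_n \in \mathcal U$ converges to some $f \in L^r(\RR)$; since each $f_n > -r$, passing to an a.e.\ convergent subsequence gives $f \geq -r$ a.e. Conversely one checks that every $f \in L^r(\RR)$ with $f \geq -r$ a.e.\ is an $L^r$-limit of elements of $\mathcal U$, by mollifying and perturbing slightly upward to restore the strict inequality and the $W^{\infty,1}$ smoothness while controlling the $L^r$-error. Thus $\overline{\mathcal U}^{L^r} = \{f \in L^r(\RR) : f \geq -r \text{ a.e.}\}$.

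Next I would transport this back via $\Phi^{-1}$. The inverse formula~\eqref{eq:Phiinverse:nonperiodic} sends $f$ to $\varphi(x) = x + \int_{-\infty}^x \big((f(\tilde x)/r + 1)^r - 1\big)\, d\tilde x$; writing $\varphi_x = (f/r+1)^r$, the condition $f \geq -r$ becomes exactly $\varphi_x \geq 0$ a.e., and $f \in L^r$ translates (via $\varphi_x^{1/r} = f/r+1 \in L^r_{\mathrm{loc}} + \text{const}$) into $\varphi$ being absolutely continuous with the stated left-limit normalization $\lim_{x\to-\infty}(\varphi(x)-x) = 0$. Surjectivity onto $\RR$ is the boundary analogue of the orientation-preserving diffeomorphism property: $\varphi' \geq 0$ a.e.\ together with $\varphi \in AC$ forces $\varphi$ to be a nondecreasing absolutely continuous map, and the integrability/normalization forces the total variation of $\varphi - \id$ to distribute so that $\varphi(\RR) = \RR$. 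One must verify this image is exactly $\mathrm{Mon}(\RR)$, checking both inclusions carefully; composition of such maps stays in the class, justifying the monoid terminology.

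The main obstacle I expect is the careful functional-analytic bookkeeping at the two endpoints, rather than any deep idea. Concretely: (i) showing the density of $\mathcal U$ in $\{f \geq -r\}$ requires a mollification argument that simultaneously preserves membership in every $W^{k,1}$ and the pointwise lower bound, which is a genuine (if routine) approximation lemma; and (ii) matching the surjectivity and the one-sided normalization condition precisely — in particular ruling out that mass "escapes to $+\infty$" so that the limiting map fails to be onto — is the delicate point, since the $L^r$-control on $f = r(\varphi_x^{1/r}-1)$ does not by itself bound $\varphi(x) - x$ as $x \to +\infty$. I would resolve this by using the one-sided decay at $-\infty$ as an anchor and monotonicity of $\varphi$ to pin down the range. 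The degenerate loss of injectivity (where $\varphi_x = 0$ on a set of positive measure) is exactly what drops us from the group to the monoid, and this should be transparent once the closure on the $L^r$ side is established.
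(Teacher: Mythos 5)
Your proposal is correct and follows essentially the same route as the paper's proof: transport the problem through the isometry $\Phi$, identify the $L^r$-closure of $\mathcal U$ as $\left\{f\in L^r(\RR): f\geq -r \text{ a.e.}\right\}$, and pull back via the inversion formula to obtain $\operatorname{Mon}(\RR)$. The paper leaves the density and pre-image identification steps as ``easy to show,'' whereas you spell out the approximation and surjectivity bookkeeping; this is a difference of detail, not of method.
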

\begin{proof}
To calculate the metric completion of $\left(\Diff_{-\infty}(\mathbb R),\operatorname{dist}^F\right)$ it is sufficient to study the metric completion of the image $\mathcal U$ of $\Phi$, i.e.,
the $L^r$ closure of $\mathcal U$, which is given by
\begin{equation}
\overline{\mathcal U}= \left\{ f\in L^{r}(\mathbb R): f\geq -r \text{ a.e.} \right\}.
\end{equation} 
Using the inversion formula for $\Phi$ it is easy to show that the elements in the pre-image of $\Phi$ are precisely the elements 
of $\operatorname{Mon}(S^1)$.
\end{proof}

\subsection{Geodesic convexity}\label{sec:bvp}
In the previous sections we have seen that the initial value problem is locally well-posed, but that solutions might blow-up in finite time. 
In this section we show the geodesic boundary value problem is better behaved. 
Namely, we show that for any any boundary conditions $\varphi_0,\varphi_1\in \Diff_{-\infty}(\mathbb R)$ there exists a minimizing geodesic. 
This is, again, a consequence of the fact that our space is isometric to a convex open subset of a vector space.
\begin{cor}
For every $r\in(1,\infty)$, the space $\left(\Diff_{-\infty}(\mathbb R),F\right)$ is geodesically convex. More precisely, given any boundary conditions 
$$\varphi(0) = \operatorname{id}\in\Diff_{-\infty}(\mathbb R),\quad \varphi(1)=\varphi_1\in\Diff_{-\infty}(\mathbb R)$$ 
there exists a unique minimizing geodesic connecting $\operatorname{id}$ to $\varphi_1$. 
\end{cor}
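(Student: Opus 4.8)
The plan is to push the whole problem forward through the isometric embedding $\Phi$ of Theorem~\ref{thm:isometry:nonperiodic} and solve it in the target normed space, where connecting two points by a shortest path is elementary. Since $\Phi$ is an isometry onto its image $\mathcal U=\{f\in W^{\infty,1}(\mathbb R):f>-r\}$ equipped with the $L^r$-norm, minimizing geodesics in $\left(\Diff_{-\infty}(\mathbb R),F\right)$ correspond exactly to minimizing geodesics in $\mathcal U$ for the length metric induced by the ambient $L^r$-norm. First I would record that $\Phi(\id)=0$, because $\id_x\equiv 1$, and set $f_1:=\Phi(\varphi_1)\in\mathcal U$.

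The geometric crux is that $\mathcal U$ is convex: the pointwise bound $f>-r$ (meaningful since $W^{\infty,1}(\mathbb R)$ embeds into continuous functions) is preserved under convex combinations, so the straight segment $s\mapsto s f_1$, $s\in[0,1]$, stays inside $\mathcal U$. Because the segment does not leave $\mathcal U$, the intrinsic length-distance within $\mathcal U$ between $0$ and $f_1$ equals the ambient distance $\|f_1\|_{L^r}$ and is realized by that segment. Pulling it back by $\Phi^{-1}$ via formula~\eqref{eq:Phiinverse:nonperiodic} produces a minimizing geodesic in $\Diff_{-\infty}(\mathbb R)$ from $\id$ to $\varphi_1$, which is geodesic convexity for the stated boundary data; for arbitrary endpoints $\varphi_0,\varphi_1$ one first applies the isometry $R_{\varphi_0^{-1}}$ (right-invariance of $F$) to reduce to the case $\varphi_0=\id$.

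For uniqueness I would use strict convexity of the $L^r$-norm for $r\in(1,\infty)$: equality in $\|a+b\|_{L^r}\le\|a\|_{L^r}+\|b\|_{L^r}$ forces $a$ and $b$ to be nonnegative multiples of one another. A standard partitioning argument then shows that any path in $\mathcal U$ whose length equals the distance between its endpoints must be a monotone reparametrization of the straight segment, so the constant-speed minimizer is unique; transporting back through $\Phi$ gives uniqueness on the diffeomorphism group.

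Each step is routine except uniqueness, which is also where the sole genuine obstacle sits. The existence argument works verbatim at $r=1$, but $L^1$ is not strictly convex, so uniqueness genuinely fails there --- this is exactly why the statement excludes $r=1$. I would therefore phrase the uniqueness step to rely only on strict convexity (Clarkson's inequalities provide uniform convexity throughout $r\in(1,\infty)$, which is more than sufficient) and not on any smoothness of the norm, keeping the argument uniform across the whole range.
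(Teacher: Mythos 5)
Your proposal is correct and follows the same route as the paper, which simply observes that the result ``follows immediately from the convexity of the set $\mathcal U$ as a subset of an infinite dimensional vector space.'' You supply the details the paper leaves implicit --- in particular the uniqueness step via strict convexity of the $L^r$-norm for $r\in(1,\infty)$, and the correct observation that this is precisely what fails at $r=1$ --- all of which is sound.
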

\begin{proof}
This result follows immediately from the convexity of the set $\mathcal U$ as a subset of an infinite dimensional vector space.
\end{proof}

\subsection{Extensions to low regularity}\label{sec:lowregularity_nonperiodic}
The analysis above holds for spaces of lower regularity: 
all the constructions hold without any major change on any space of diffeomorphisms of $\RR$ for which the following holds: (i) it is closed under taking inverse; (ii) composition from the right is smooth; (iii) the $\dot{W}^{1,r}$ Finsler norms make sense on it. 
A natural space to consider in this case, which includes $\Diff_{-\infty}(\RR)$, is the space of integrable bi-Lipschitz homeomorphisms:
\begin{multline}
\biLip^{1,1}_{-\infty}(\RR) := \Big\{ \varphi=\id+f:   \varphi \text{ is invertible, }  \varphi^{-1}=\id+g,\\ f,g \in \dot{W}^{1,1}(\RR)\cap W^{1,\infty}(\RR), \,\, \lim_{x\to -\infty} f(x) = 0\Big\}.
\end{multline}
Here $\dot W^{1,1}(\RR)$ is the space of functions with an integrable derivative. 
This space is a manifold and a topological group, in which composition from the right is smooth (thus it is a \emph{half Lie-group} in the terminology of \cite{kriegl2015exotic,marquis2018half}).
Its Lie-algebra is the space
\[
\mathfrak{lip}^{1,1}_{-\infty}(\RR) := \{ u \in W^{1,\infty}(\RR)\cap \dot{W}^{1,1}(\RR) ~:~ \lim_{x\to -\infty} u(x) =0 \},
\]
on which all the $\dot{W}^{1,r}$ Finsler norms are well defined and also the $r\to \infty$ limit make sense (see below).
Moreover, the basic mapping and the solution formula from Theorem~\ref{thm:geodesicsDiffR} naturally extend to this space; extensions 
to any larger space of functions seem difficult. 

The space $\biLip^{1,1}_{-\infty}(\RR)$ includes, in particular, all bi-Lipschitz piecewise-linear homeomorphisms of $\RR$ that decay at $-\infty$:
\[
\Diff^{\text{PL}}_{-\infty}(\RR) := \left\{ \varphi=\id+f \in \biLip^{1,1}_{-\infty}(\RR) ~:~ f' \text{ is piecewise constant } \right\}.
\]
Using the solution map \eqref{eq:geodesicDiffnonperiodic} we immediately obtain the following result concerning this submanifold:
\begin{cor}
For every $r\in(1,\infty)$, the space $\Diff^{\text{PL}}_{-\infty}(\RR)$ is totally geodesic in $\biLip^{1,1}_{-\infty}(\RR)$ with respect to the flow of the $r$-Hunter--Saxton equation.
That is, any solution whose initial velocity $u_0\in \mathfrak{lip}^{1,1}_{-\infty}(\RR)$ is piecewise-linear, remains in $\Diff^{\text{PL}}_{-\infty}(\RR)$ as long as the flow exists. 
\end{cor}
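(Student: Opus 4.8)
The plan is to exploit the explicit solution formula of Theorem~\ref{thm:geodesicsDiffR}, which (as noted just above the statement) continues to hold on $\biLip^{1,1}_{-\infty}(\RR)$, and to observe that the only thing this formula does to the initial data is apply a pointwise scalar nonlinearity to $u_0'$. Concretely, differentiating \eqref{eq:geodesicDiffnonperiodic} in $x$ gives $\varphi_x(t,x)=\left(\frac{t u_0'(x)}{r}+1\right)^r$, so the spatial derivative of the flow at time $t$ is obtained from $u_0'$ by post-composition with the fixed map $s\mapsto (ts/r+1)^r$. This structural feature is what makes the submanifold of piecewise-linear maps invariant.

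First I would record that, for $u_0\in\mathfrak{lip}^{1,1}_{-\infty}(\RR)$, being piecewise-linear is equivalent to $u_0'$ being piecewise constant: say $u_0'\equiv c_i$ on each interval $I_i$ of a locally finite partition of $\RR$, with $c_i=0$ on the (at most two) unbounded end-intervals, which is forced by $u_0'\in L^1(\RR)$ together with $\lim_{x\to-\infty}u_0=0$. Then on each $I_i$ the function $\varphi_x(t,\cdot)=(tc_i/r+1)^r$ is constant in $x$, hence $f':=\varphi_x(t,\cdot)-1$ is piecewise constant with the same partition, vanishing on the unbounded end-intervals. Combined with the regularity already supplied by Theorem~\ref{thm:geodesicsDiffR}, this shows $\varphi(t,\cdot)\in\Diff^{\text{PL}}_{-\infty}(\RR)$ for every $t$ for which the flow exists, i.e. for which $\varphi_x(t,\cdot)>0$.

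To confirm that the full solution of the $r$-HS equation stays tangent to $\Diff^{\text{PL}}_{-\infty}(\RR)$, I would then compute the Eulerian velocity. From $\varphi_x=(tu_0'/r+1)^r$ one gets $\varphi_{tx}=u_0'(tu_0'/r+1)^{r-1}$, so $\frac{\varphi_{tx}}{\varphi_x}=\frac{u_0'}{tu_0'/r+1}$, which is again piecewise constant in the Lagrangian variable; since $u=\varphi_t\circ\varphi^{-1}$ satisfies $u_x=(\varphi_{tx}/\varphi_x)\circ\varphi^{-1}$ and $\varphi^{-1}$ is itself a piecewise-linear homeomorphism (with breakpoints the $\varphi$-images of those of $u_0$), the velocity $u(t,\cdot)$ is piecewise-linear as well. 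This is the precise sense in which the solution ``remains in $\Diff^{\text{PL}}_{-\infty}(\RR)$''. To phrase this as total geodesy at an arbitrary base point $\varphi_0\in\Diff^{\text{PL}}_{-\infty}(\RR)$, I would invoke right-invariance: such a geodesic has the form $\psi(t)\circ\varphi_0$ with $\psi$ the geodesic from $\id$ having piecewise-linear initial velocity, and a composition of two piecewise-linear homeomorphisms is again piecewise-linear.

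I do not expect a genuine obstacle: the entire content is the observation that the solution map acts on $u_0'$ through a pointwise scalar nonlinearity, which manifestly preserves piecewise-constancy. The only points requiring care are bookkeeping ones, namely checking that $\varphi_x(t,\cdot)-1$ stays in $L^1(\RR)\cap L^\infty(\RR)$ (so that $\varphi$ genuinely lies in the ambient space $\biLip^{1,1}_{-\infty}(\RR)$) and that the partition remains locally finite. Both follow directly from the corresponding properties of $u_0'$ and from the smoothness and boundedness of $s\mapsto (ts/r+1)^r$ on the relevant range of values of $u_0'$ (which is bounded since $u_0\in W^{1,\infty}(\RR)$).
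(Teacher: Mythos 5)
Your proposal is correct and follows the same route as the paper, which gives no separate proof but simply observes that the result is immediate from the solution formula \eqref{eq:geodesicDiffnonperiodic}: differentiating it shows $\varphi_x(t,\cdot)=(tu_0'/r+1)^r$ is a pointwise nonlinearity applied to $u_0'$, hence preserves piecewise-constancy. Your write-up just makes explicit the bookkeeping (decay, integrability, the Eulerian velocity) that the paper leaves implicit.
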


These piecewise linear solutions were studied in detail in \cite{cotter2020r}, and this corollary provides a geometric framework for them.

\subsection{The limit $r\to \infty$.}\label{sec:r_to_infty}
In the following we study the limiting behavior of $r$-HS  as $r\to \infty$.
In particular, we show that the limiting solutions are solutions of the $\lambda$-PJ equation for $\lambda=0$,
and that the $\dot{W}^{1,\infty}$-norm of the velocity is preserved along the flow, thus answering several questions posed by Cotter et al.~\cite{cotter2020r}.

\begin{thm}[Lagrangian viewpoint]
Let $\vp^r(t,x)$ be the solution of the $r$-Hunter--Saxton equation with initial conditions $\vp^r(0,x) = x$ and $\vp^r_t(0,x) = u_0 \in \frakg_{-\infty}$.
Then we have the pointwise limit
\begin{equation}\label{eq:infty_geodesic_nonperiodic}
\lim_{r\to \infty} \vp^r(t,x) = x + \int_{-\infty}^x \left(e^{tu_0'(y)}-1\right) \,dy =: \varphi^\infty(t,x),
\end{equation}
which exists for all time.
The function $\varphi^\infty(t,x)\in \Diff_{-\infty}(\RR)$ satisfies the differential equation
\begin{equation}\label{eq:inftyHS_Lagrange}
\left(\frac{\varphi_{tx}}{\varphi_x} \right)_t = 0,
\end{equation}
which is the formal limit of equation \eqref{eq:$r$-HS_Lagrange} as $r\to \infty$.
\label{thm:inftyRS_Lagrange}
\end{thm}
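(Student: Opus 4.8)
The plan is to take the limit directly in the explicit solution formula from Theorem~\ref{thm:geodesicsDiffR}, and then verify the limiting ODE by a direct computation. First I would recall that for each finite $r\in(1,\infty)$ the solution is given explicitly by \eqref{eq:geodesicDiffnonperiodic}, namely $\vp^r(t,x) = x + \int_{-\infty}^x \left(\left(\tfrac{tu_0'(y)}{r}+1\right)^r - 1\right)dy$. The pointwise limit \eqref{eq:infty_geodesic_nonperiodic} then follows from the elementary limit $\left(1+\tfrac{a}{r}\right)^r \to e^a$, applied with $a = tu_0'(y)$. The subtlety is that I must pass the limit \emph{inside} the integral, so the key analytic step is a dominated-convergence (or uniform-convergence) argument. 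Here I would use that $u_0 \in \frakg_{-\infty}$, so $u_0'\in W^{\infty,1}(\RR)$ is bounded, say $|u_0'(y)|\le M$; then for $r$ large enough the integrand is uniformly bounded by an integrable majorant (e.g. by $\max(e^{tM}-1,\,1-e^{-tM})$ times the indicator that $u_0'\ne 0$, or more carefully by exploiting $u_0'\in L^1$), which justifies the interchange and simultaneously shows the limit integral converges for all $t$. Note that, unlike the finite-$r$ case, here the factor $\tfrac{tu_0'}{r}+1$ need not stay positive, but the exponential $e^{tu_0'}$ is automatically positive, so no blow-up occurs and the limit exists for all time.

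Next I would check that $\varphi^\infty(t,\cdot)\in\Diff_{-\infty}(\RR)$. Writing $\varphi^\infty = \id + f^\infty$ with $f^\infty(x) = \int_{-\infty}^x(e^{tu_0'(y)}-1)\,dy$, I must verify that $(f^\infty)' = e^{tu_0'}-1 \in W^{\infty,1}(\RR)$, that $(f^\infty)' > -1$, and that $f^\infty(x)\to 0$ as $x\to-\infty$. The positivity $(f^\infty)'=e^{tu_0'}-1>-1$ is immediate since the exponential is positive; the decay at $-\infty$ follows from the lower limit of integration; and membership in $W^{\infty,1}(\RR)$ follows since $u_0'\in W^{\infty,1}(\RR)$ and the map $s\mapsto e^{s}-1$ is smooth, vanishes at $0$, and so composition preserves the intersection of Sobolev spaces (this is the same type of composition estimate already invoked in the proof of Theorem~\ref{thm:isometry:nonperiodic} for $s\mapsto (1+s)^{1/r}-1$). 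I expect this verification to be routine given the tools already in the paper.

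Finally, to establish \eqref{eq:inftyHS_Lagrange}, I would compute $\varphi^\infty_{tx}/\varphi^\infty_x$ explicitly. From the formula, $\varphi^\infty_x = e^{tu_0'(x)}$ and $\varphi^\infty_{tx} = u_0'(x)\,e^{tu_0'(x)}$, so that $\varphi^\infty_{tx}/\varphi^\infty_x = u_0'(x)$, which is independent of $t$; hence $\left(\varphi^\infty_{tx}/\varphi^\infty_x\right)_t = 0$, as claimed. I would also remark that this is precisely the $r\to\infty$ formal limit of \eqref{eq:$r$-HS_Lagrange}: dividing that equation by $r$ and writing $w := \varphi_{tx}/\varphi_x$, the equation reads $\tfrac{d}{dt}\!\left(w|w|^{r-2}\right) + \tfrac{r-1}{r}|w|^r = 0$, whose leading behavior as $r\to\infty$ formally forces $w_t=0$. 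The main obstacle in the whole argument is the justification of the limit interchange in the integral; once the domination is set up via the uniform bound on $u_0'$ together with its integrability, the remaining steps are direct differentiation and standard composition estimates.
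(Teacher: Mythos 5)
Your proposal follows essentially the same route as the paper: pass to the limit $r\to\infty$ in the explicit solution formula \eqref{eq:geodesicDiffnonperiodic} using $\left(1+\tfrac{a}{r}\right)^r\to e^a$ with a justification for interchanging limit and integral (the paper invokes monotone convergence where you use a dominated-convergence majorant built from $u_0'\in L^1\cap L^\infty$ --- both work), and then verify \eqref{eq:inftyHS_Lagrange} by computing $\varphi^\infty_{tx}/\varphi^\infty_x=u_0'(x)$ directly. Your additional check that $\varphi^\infty(t,\cdot)\in\Diff_{-\infty}(\RR)$ and your derivation of the formal limit are consistent with what the paper does (it performs the corresponding computation in the proof of Corollary~\ref{cor:inftyRS_Euler} and via a rewriting of \eqref{eq:$r$-HS_Lagrange}), so no substantive difference or gap.
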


\begin{cor}[Eulerian viewpoint] \label{cor:inftyRS_Euler}
The vector field associated with \eqref{eq:infty_geodesic_nonperiodic} satisfies the equation 
\begin{equation}\label{eq:inftyHS_Euler}
u_{xt} + u_{xx} u = 0,
\end{equation}
which is the Eulerian version of \eqref{eq:inftyHS_Lagrange}.
By differentiating, this equation is equivalent to the $\lambda$-PJ equation with $\lambda = 0$.

The derivative of the vector field satisfies
\begin{equation}\label{eq:uinfty_x}
u^\infty_x(t,x) = u_0'((\vp^\infty)^{-1}(t,x)),
\end{equation}
and thus the $\dot{W}^{1,\infty}$-norm of $u$ is preserved along the flow.
\end{cor}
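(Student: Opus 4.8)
The plan is to read everything off from the explicit Lagrangian flow $\varphi^\infty$ of Theorem~\ref{thm:inftyRS_Lagrange} together with the elementary chain-rule dictionary between Lagrangian and Eulerian quantities. From \eqref{eq:infty_geodesic_nonperiodic} one has $\varphi^\infty_x(t,x)=e^{tu_0'(x)}$, hence $\varphi^\infty_{tx}/\varphi^\infty_x = u_0'(x)$, independent of $t$ (this is just \eqref{eq:inftyHS_Lagrange} again, and is the only structural input needed). Differentiating the defining relation $\varphi_t = u\circ\varphi$ in $x$ gives the standard identity $u_x\circ\varphi = \varphi_{tx}/\varphi_x$, so
\[
u_x\big(t,\varphi^\infty(t,x)\big) = \frac{\varphi^\infty_{tx}(t,x)}{\varphi^\infty_x(t,x)} = u_0'(x).
\]
Since $\varphi^\infty(t,\cdot)$ is a bijection of $\RR$, this is exactly \eqref{eq:uinfty_x}; taking suprema over $x$ then yields $\|u_x(t,\cdot)\|_{L^\infty}=\|u_0'\|_{L^\infty}$, which is the claimed preservation of the $\dot W^{1,\infty}$-norm.

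For the Eulerian equation \eqref{eq:inftyHS_Euler} I would differentiate the identity $u_x(t,\varphi^\infty(t,x))=u_0'(x)$ in $t$ at a fixed Lagrangian label $x$. Using $\varphi^\infty_t = u\circ\varphi^\infty$, the chain rule turns the left-hand side into $\big(u_{xt}+u\,u_{xx}\big)(t,\varphi^\infty(t,x))$, while the right-hand side is zero. Surjectivity of $\varphi^\infty(t,\cdot)$ then forces $u_{xt}+u\,u_{xx}=0$ pointwise, establishing \eqref{eq:inftyHS_Euler}. To see the equivalence with the $\lambda$-PJ equation I would simply differentiate \eqref{eq:inftyHS_Euler} once more in $x$, obtaining $u_{txx}+u_xu_{xx}+uu_{xxx}=0$, which is precisely \eqref{gPJequation} with $1+2\lambda=1$, i.e.\ $\lambda=0$.

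The argument is mechanical and I do not anticipate a genuine obstacle; the only points demanding care are the regularity needed to justify differentiating under the chain rule --- guaranteed by $u_0\in\frakg_{-\infty}$ and the smoothness of $\varphi^\infty$ on its interval of existence --- and the passage from the composed identity to the pointwise PDE, which rests on the surjectivity of the flow map established in Theorem~\ref{thm:inftyRS_Lagrange}.
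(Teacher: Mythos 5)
Your proposal is correct and follows essentially the same route as the paper: both derive $u_x\circ\varphi^\infty=\varphi^\infty_{tx}/\varphi^\infty_x=u_0'$ from the explicit formula for $\varphi^\infty$, read off \eqref{eq:uinfty_x} and the conservation of the $\dot W^{1,\infty}$-norm, and then obtain \eqref{eq:inftyHS_Euler} by differentiating this identity in time along the flow. The only (harmless) addition is that you spell out the final differentiation in $x$ giving the $\lambda=0$ PJ equation, which the paper asserts without detail.
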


\begin{proof}[Proof of Theorem~\ref{thm:inftyRS_Lagrange}]
Taking the limit $r\to \infty$ of $\vp^r$ as given by the solution formula \eqref{eq:geodesicDiffnonperiodic}, we immediately obtain \eqref{eq:infty_geodesic_nonperiodic} by the monotone convergence theorem.
A straight-forward calculation shows that \eqref{eq:infty_geodesic_nonperiodic} satisfies the equation \eqref{eq:inftyHS_Lagrange}.

Writing \eqref{eq:$r$-HS_Lagrange} as 
\[
\left(\frac{1}{r-2}\left|\frac{\varphi_{tx}}{\varphi_x}\right| + \frac{\varphi_{tx}}{\varphi_x}\right) \left(\frac{\varphi_{tx}}{\varphi_x} \right)_t +\frac{r-1}{r(r-2)}\left|\frac{\varphi_{tx}}{\varphi_x}\right|^3=0,
\]
and taking the formal limit $r\to \infty$, we obtain \eqref{eq:inftyHS_Lagrange} (similarly, \eqref{eq:inftyHS_Euler} is formally obtained from \eqref{eq:$r$-HS_Euler_int2} by differentiating and taking $r\to \infty$).
\end{proof}

\begin{proof}[Proof of Corollary~\ref{cor:inftyRS_Euler}]
From \eqref{eq:infty_geodesic_nonperiodic} we have that $\vp^\infty_x (t,x) = e^{tu_0'(x)}$ and $\vp_{xt}^\infty(t,x) = u_0'(x) e^{tu_0'(x)}$, and thus
\[
u^\infty_x(t,\vp(t,x)) = \frac{\vp^\infty_{xt}(t,x)}{\vp^\infty_x(t,x)} = u_0'(x),
\]
from which \eqref{eq:uinfty_x} follows, and thus
\[
\|u^\infty_x(t,\cdot)\|_\infty = \|u_0'\|_\infty,
\]
which proves the preservation of the $\dot{W}^{1,\infty}$-norm.
Differentiating \eqref{eq:uinfty_x} and using \eqref{eq:inftyHS_Lagrange} immediately shows that $u^\infty$ satisfies \eqref{eq:inftyHS_Euler}.
\end{proof}

\subsection{$r= 1$ and Burgers' equation}
In \cite[Remark~3.4]{cotter2020r} it was observed that the $1$-HS equation is formally equivalent  to the inviscid Burgers' equation. 
In the following we will give a geometric interpretation for this phenomenon.
The inviscid Burgers' equation
\begin{equation*}\label{eq:burger}
u_t+uu_x=0,
\end{equation*}
in Lagrangian coordinates, is transformed to the straight line equation $\varphi_{tt}=0$, where $\varphi_t=u\circ\varphi$.
Thus, the Burgers' equation can be interpreted as (is equivalent to) the geodesic equation of any Finsler metric on $\Diff(\mathbb R)$, where geodesics are given by straight lines.
This happens, as is widely known, for the flat $L^2$ metric, but also, in fact, for any flat (non-invariant) $W^{k,r}$-Finsler metric,
\begin{equation*}
\bar F_{\varphi}(h) = \|h\|_{W^{k,r}},
\end{equation*}
as well as their homogeneous counterparts.
Here, by flat we mean that the norm of $h$ is independent of the footpoint $\varphi$, unlike invariant metrics which are the focus of this paper, which correspond to taking the appropriate norm of $h\circ \vp^{-1}$.

However, for the specific case of the homogeneous $W^{1,1}$-Finsler metric, the flat (non-invariant) metric and the invariant metric coincide:
\begin{align*}
F_{\varphi}(h)=\int_{\mathbb R} |(h\circ\varphi^{-1})'| dx=
\int_{\mathbb R} \left|\left(\tfrac{h'}{\varphi'}\right)\circ\varphi^{-1}\right| dx
=
\int_{\mathbb R} |h'| dx=\bar F_{\varphi}(h),
\end{align*}
and thus the geodesic equation of the $\dot W^{1,1}$-Finsler metric, which formally corresponds to the $1$-HS equation, is equivalent to the Burgers' equation.
Note that this is the only value for $r$ and $k$ such, that the non-invariant and invariant $W^{k,r}$-metrics are equal.

\subsection{The range $r<1$}\label{sec:r<1}
In this section we discuss the parameter range $r<1$ (which corresponds to $\lambda\in (-\infty,0)\cup (1,\infty)$). This is of particular importance as it includes the original PJ equation --- $r=\lambda=-1$ --- and is related to self-similar axisymmetric Navier--Stokes equation in higher dimensions. The space $L^r$ is only a normed space for $r\geq1$ and thus $F$ does not define a Finsler structure if $r<1$. Consequently we do not have a geometric interpretation of the $\lambda$-PJ ($r$-HS, resp.) equation as a geodesic equation of a Finsler metric on a diffeomorphism group. However, much of our analysis, including the explicit solution formula does go through, at least formally.

First, we note that for any $\lambda\in \mathbb{R}$, the $\lambda$-PJ equation, in Lagrangian coordinates, can be written as
\begin{equation}\label{eq:lambdaPJLagrange}
\frac{d}{dt}\left(\frac{\varphi_{xt}}{\varphi_{x}}\right) + \lambda \left(\frac{\varphi_{xt}}{\varphi_{x}}\right)^2 = 0,
\end{equation}
which is equivalent to \eqref{eq:$r$-HS_Lagrange} (with $r=\lambda^{-1}$) as long as $r\ne 0,1$. 
Thus, the proof of Theorem~\ref{thm:nonperiodicHS} shows, that at least formally (without worrying about integrability or differentiability of the functions involved), the $\lambda$-PJ equation is equivalent to the Euler-Lagrange equation of the invariant energy
\[
E_{\lambda^{-1}}(\varphi) = \int_{0}^1 \int_\mathbb{R} \left| \frac{\varphi_{xt}}{\varphi_{x}}\right|^{1/\lambda} \varphi_x \,dx\,dt
\]
for any $\lambda \ne 0,1$, and not only for $\lambda \in (0,1)$ as in Theorem~\ref{thm:nonperiodicHS}.
However, for $\lambda\notin (0,1)$, it is not clear how to relate this energy as a geodesic equation, and if $\lambda<0$ (equiv.\ $r<0$), then there are issues of convergence of this integral.
We note that these are the only invariant energies of the form
\[
 \int_{0}^1 \int_\mathbb{R} f\left( \frac{\varphi_{xt}}{\varphi_{x}}\right) \varphi_x \,dx\,dt,
\]
that yield the $\lambda$-PJ equation, as a straightforward calculation shows that $f$ must satisfy
\[
\lambda s^2 f''(s) - s f'(s) + f(s) = 0, \qquad f''\not\equiv 0,
\]
the solutions of which are $f(s) = s^{1/\lambda}$ as above (in the case $\lambda=r=1$ we also get $f(s) = s\log s$).

Finally, a straightforward calculation shows that \eqref{eq:geodesicDiffnonperiodic} indeed solves \eqref{eq:lambdaPJLagrange}, for $0\ne r=\lambda^{-1}$, which leads to the following corollary:
\begin{cor}
Let $r\in \mathbb R\setminus\{0\}$. Given initial conditions 
$u_0\in \mathfrak g_{-\infty}$ the unique solution to
the $r$-HS equation ($r^{-1}$-PJ, resp.) is given by
$u=\varphi(t,\varphi^{-1}(t,x))$, where $\varphi$ is given by
\eqref{eq:geodesicDiffnonperiodic}. 
\end{cor}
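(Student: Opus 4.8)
The plan is to prove the corollary by direct verification rather than by geometry, since for $r<1$ the isometry of Theorem~\ref{thm:isometry:nonperiodic} is unavailable ($L^r$ is no longer a norm). By the discussion preceding the corollary, for every $r\ne 0,1$ the $\lambda$-PJ (equivalently $r$-HS) equation in Lagrangian coordinates is precisely \eqref{eq:lambdaPJLagrange} with $\lambda=1/r$, and the Eulerian velocity is recovered as $u=\varphi_t\circ\varphi^{-1}$ exactly as in the proof of Theorem~\ref{thm:nonperiodicHS}. It therefore suffices to show that the map $\varphi$ defined by \eqref{eq:geodesicDiffnonperiodic} is the unique solution of \eqref{eq:lambdaPJLagrange} with $\varphi(0)=\id$ and $\varphi_t(0)=u_0$.

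First I would differentiate the solution formula. Equation \eqref{eq:geodesicDiffnonperiodic} gives at once $\varphi_x(t,x)=(1+tu_0'(x)/r)^r$, whence $\varphi(0)=\id$ and, after integrating in $x$ and using $\lim_{x\to-\infty}u_0=0$, also $\varphi_t(0)=u_0$. Setting $w(t,x):=\varphi_{xt}/\varphi_x=(\log\varphi_x)_t$, a one-line computation yields $w=u_0'/(1+tu_0'/r)$ with $w(0,\cdot)=u_0'$. Substituting into \eqref{eq:lambdaPJLagrange}, which reads $w_t+\lambda w^2=0$, the two terms cancel pointwise in $x$ (with $\lambda=1/r$). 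This is the ``straightforward calculation'' referred to above, and crucially it never uses $r\ge 1$, so it is valid for all $r\ne 0$.

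For the uniqueness claim I would exploit the ODE structure of \eqref{eq:lambdaPJLagrange}. For each fixed $x$ it is an autonomous first-order (Riccati) equation for $w(\cdot,x)$, whose initial value $w(0,x)=u_0'(x)$ is read off from $\varphi(0)=\id$ and $\varphi_t(0)=u_0$. Scalar ODE uniqueness pins down $w$ on the maximal interval where $1+tu_0'(x)/r>0$; integrating $(\log\varphi_x)_t=w$ from $\varphi_x(0,\cdot)=1$ then determines $\varphi_x$, and a further integration in $x$ subject to the normalization $\lim_{x\to-\infty}(\varphi-\id)=0$ determines $\varphi$, hence $u$. This reproduces the uniqueness of Theorem~\ref{thm:geodesicsDiffR} without appealing to the isometry.

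The part demanding genuine care is not the algebra but the bookkeeping of function spaces in the range $r<1$, where the formal manipulations of Section~\ref{sec:r<1} (integrating and differentiating \eqref{eq:lambdaPJLagrange}) must be justified. For $r>0$ this proceeds exactly as in Theorem~\ref{thm:geodesicsDiffR}. For $r<0$ one must verify that $\varphi_x-1=(1+tu_0'/r)^r-1$ still lies in $W^{\infty,1}(\RR)$ and that $\varphi_x>0$; since $u_0'\in W^{\infty,1}(\RR)$ decays at $\pm\infty$, the integrand behaves like $tu_0'$ near infinity and is integrable there, so the only failure is loss of positivity, which delimits the interval of existence to those $t$ with $\inf_x(1+tu_0'/r)>0$. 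Checking these membership and positivity conditions, and delimiting the existence time accordingly, is the main obstacle beyond the purely formal verification.
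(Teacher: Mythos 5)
Your proposal is correct and follows essentially the same route as the paper: the paper's justification is precisely the ``straightforward calculation'' that \eqref{eq:geodesicDiffnonperiodic} solves \eqref{eq:lambdaPJLagrange}, which you carry out explicitly via $w=\varphi_{xt}/\varphi_x=u_0'/(1+tu_0'/r)$ and $w_t+\lambda w^2=0$. The pointwise Riccati uniqueness argument and the integrability/positivity bookkeeping for $r<0$ are details the paper leaves implicit, and you supply them correctly.
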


Note that formula \eqref{eq:blowup} for the blowup time still holds whenever $r>0$. 
For $r<0$ (that is $\lambda <0$), there is blowup of $\varphi$ if and only if $u_0'(x_0)>0$ for some $x_0$  (the exact converse of the case $r>0$), and in this case the blow-up time is given by $T^*(u_0) = \frac{|r|}{\sup_{x\in \RR} u_0'(x)}$. 
The nature of the blowup is also different between $r>0$ and $r<0$: in the former, $\varphi$ loses the immersion property at the blowup time (that is, we have $\varphi_x(T^*,x_0) = 0$ at some point);
in the latter, $\varphi_x$ blows up at some point at the blowup time.
For a detailed study of blowup in this range for the periodic $\lambda$-PJ equation, see Sarria and Saxton~\cite{SS13} and the references therein (note that their parameter $\lambda$ corresponds to our $-\lambda$).

\section{The periodic case}\label{sec:periodic}
In this section we briefly discuss the periodic situation.
From a functional analytic point of view the compactness of the domain simplifies the situation --- there are no decay conditions required to equip
the diffeomorphism group with a manifold structure. 
The geometric picture, however, is more complicated: we will show that the diffeomorphism group with the $\dot W^{1,r}$ metric is isometric to an open subset of an $L^r$-sphere.

The derivation of the equations themselves is similar to the non-periodic case, as well as the generalization to lower regularity, and thus we do not repeat them here.

\subsection{Diffeomorphisms on the circle and the $r$-HS equation}
We start by introducing the group of smooth, orientation preserving diffeomorphims on the circle, i.e., we consider the space
\begin{equation}
\Diff(S^1)=\left\{\varphi\in C^{\infty}(S^1,S^1) ~:~  \varphi'>0, \,\,\varphi^{-1}\in  C^{\infty}(S^1,S^1) \right\}.
\end{equation}
It is well known (see, e.g., \cite{omori1970group}) that the space $\Diff(S^1)$ is a smooth, infinite dimensional Fr\'echet Lie-group with Lie-algebra 
$\mathfrak g=C^{\infty}(S^1)$ the space of vector fields on $S^1$.
The right-invariant $\dot W^{1,r}$-Finsler metric, as introduced in~\eqref{eq:W1rnorm:R} with integration over $\mathbb R$ replaced 
by integration over $S^1$, is only a degenerate Finsler metric on this group (constant vector fields are in the kernel).
This  leads us to consider the metric on the homogenous space 
$\on{Rot}\backslash \Diff(S^1)$ of Sobolev diffeomorphisms modulo rotations, which we will identify 
with the section 
\begin{equation}
\Diff_0(S^1)=\left\{\varphi\in\Diff(S^1): \varphi(0)=0\right\},
\end{equation}
where we identified the circle $S^1$ with the interval $[0,1]$.
Note, that Lie-Algebra of $\Diff_0(S^1)$ consists of the space
$$C^{\infty}_0(S^1):=\left\{u\in C^{\infty}(S^1): u(0)=0\right\}.$$
As the kernel of the norm $F$  consists exactly of all constant vector fields,
It is now easy to see that the norm $F$ as defined in~\eqref{eq:W1rnorm:R} defines a right-invariant Finsler metric on $\Diff_0(S^1)$
(recall that the only constant tangent vector to
$\Diff_0(S^1)$ is the zero vector field).
The periodic $r$-HS equation can now be interpreted as the Finsler--Euler--Arnold equation on $\Diff_0(S^1)$, i.e., the analogue of the existence part of Theorem~\ref{thm:nonperiodicHS}
holds in the periodic case.

Also, similarly to Section~\ref{sec:lowregularity_nonperiodic}, this formulation allows us to consider these equations in lower regularity, namely on the space of bi-Lipschitz, orientation preserving homeomorphisms (modulo rotations):
\[
\biLip_0(S^1) := \Big\{ \varphi \in W^{1,\infty}(S^1) :   \varphi'>0 ,\,  \varphi^{-1}\in W^{1,\infty}(S^1),\, \varphi(0)=0 \Big\},
\]
whose Lie-algebra is the space of Lipschitz velocities
\[
\mathfrak{lip}_0(S^1) := \{ u \in W^{1,\infty}(S^1) ~:~ u(0)=0 \}.
\]

\subsection{An isometry to the sphere}\label{sec:transformation_per}
We will now construct an isometry that will map the 
diffeomorphism group with the right-invariant $\dot W^{1,r}$-Finsler metric to an infinite dimensional $L^r$-sphere of radius $r$.
As mentioned earlier, this transformation was already used by Sarria and Saxton to study the $\lambda$-PJ equation \cite{SS13}.\footnote{It was also used as a tool for studying the diameter of diffeomorphism groups \cite{bauer2019can}.}
The merit of this section is in revealing the geometric meaning of it in the context of the $r$-HS equation.
Also, we hope that this viewpoint will provide further tools to study these equations by studying the $L^r$-sphere (in a similar way to the Hunter--Saxton case \cite{lenells2007hunter}). 

 \begin{thm}\label{thm:isometry:periodic}
Let $r\in[1,\infty)$. The mapping
\begin{equation}
\Phi: \begin{cases}
\left(\Diff_0(S^1), F\right)  &\to  \left(C^{\infty}({S^1}),L^r\right)\\ \varphi &\mapsto r\;{\varphi_x}^{\frac{1}{r}}
\end{cases}
\end{equation}
is an isometric embedding. Furthermore, the image $\mathcal U=\Phi(\Diff_0(S^1))$  is an open subset of the $L^r$-sphere of radius $r$ given by 
\begin{equation}
\mathcal U=\{f \in C^{\infty}({S^1}):f>0, \|f\|_{L^r}=r\}.
\end{equation}
The inverse of $\Phi$ is given by
\begin{equation}\label{eq:Phiinverse}
\Phi^{-1}: \begin{cases}
\mathcal U  &\to  \Diff_0(S^1) \\ f &\mapsto  \frac{1}{r^r} \int_0^x |f(\tilde x)|^r d\tilde x.
\end{cases}
\end{equation}
 \end{thm}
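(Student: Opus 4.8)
The statement to prove (Theorem~\ref{thm:isometry:periodic}): the map $\Phi(\varphi) = r\,\varphi_x^{1/r}$ is an isometric embedding from $(\Diff_0(S^1),F)$ into $(C^\infty(S^1),L^r)$, with image the open subset $\{f>0,\ \|f\|_{L^r}=r\}$ of the $L^r$-sphere of radius $r$, and inverse $f \mapsto \frac{1}{r^r}\int_0^x |f|^r$.

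Let me sketch my proof plan.

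=== BEGIN PROOF PROPOSAL ===

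The plan is to follow the same three-step structure as the proof of the non-periodic isometry (Theorem~\ref{thm:isometry:nonperiodic}), adapting each step to account for the compactness of $S^1$ and the sphere constraint that replaces the open half-space. First I would verify that $\Phi$ is well-defined: for $\varphi\in\Diff_0(S^1)$ we have $\varphi_x\in C^\infty(S^1)$ and $\varphi_x>0$, so $\varphi_x^{1/r}$ is a smooth positive function, hence $\Phi(\varphi)=r\,\varphi_x^{1/r}\in C^\infty(S^1)$ lands in the right space. The new feature here, absent in the non-periodic case, is that the image automatically satisfies the sphere constraint: since $\varphi\in\Diff_0(S^1)$ is a diffeomorphism of the circle (identified with $[0,1]$) fixing $0$, one has $\int_0^1 \varphi_x\,dx = \varphi(1)-\varphi(0)=1$, and therefore
\[
\|\Phi(\varphi)\|_{L^r}^r = \int_0^1 \left(r\,\varphi_x^{1/r}\right)^r dx = r^r\int_0^1 \varphi_x\,dx = r^r,
\]
so $\|\Phi(\varphi)\|_{L^r}=r$. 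This is the key geometric point: the normalization $\int\varphi_x=1$ forced by the circle being a compact domain of total length one is exactly what pins the image onto the sphere of radius $r$, whereas on $\RR$ no such constraint appears and the image fills an open half-space.

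Next I would establish the isometry property by computing the derivative of $\Phi$ exactly as in the non-periodic proof. For a tangent vector $h\in T_\varphi\Diff_0(S^1)$ the directional derivative is $D_{\varphi,h}\Phi = \varphi_x^{\frac{1}{r}-1}h_x$, and hence
\[
\|D_{\varphi,h}\Phi\|_{L^r}^r = \int_{S^1} \left|\varphi_x^{\frac{1}{r}-1}h_x\right|^r dx = \int_{S^1}\varphi_x^{1-r}|h_x|^r\,dx = F_\varphi(h)^r,
\]
which is precisely the right-invariant $\dot W^{1,r}$-Finsler norm in~\eqref{eq:W1rnorm:R} (integration over $S^1$), after writing $h=X\circ\varphi$ and changing variables. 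Note that the derivative of $\Phi$ maps $T_\varphi\Diff_0(S^1)$ into the tangent space of the sphere at $\Phi(\varphi)$, which is consistent with the image lying on the sphere; a brief check via differentiating the constraint $\int\varphi_x\,dx=1$, giving $\int h_x\,dx = h(1)-h(0)=0$, confirms that $D_{\varphi,h}\Phi$ is $L^r$-orthogonal (in the relevant sense) to the radial direction. Since the $L^r$ norm on the ambient sphere is the restriction of the ambient $L^r$ norm, this computation shows $\Phi$ is a Finsler isometry onto its image.

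Finally I would identify the image and invert the map. Injectivity follows because $\Phi(\varphi)$ determines $\varphi_x = (\Phi(\varphi)/r)^r$ uniquely, and then $\varphi$ is recovered by integration using $\varphi(0)=0$; this directly yields the inverse formula $\Phi^{-1}(f) = \frac{1}{r^r}\int_0^x |f|^r\,d\tilde x$, which I would verify by direct substitution (computing $\Phi(\Phi^{-1}(f))$ and checking the result equals $f$, using $f>0$ so that $|f|^r=f^r$). To show the image is exactly the open subset $\{f\in C^\infty(S^1): f>0,\ \|f\|_{L^r}=r\}$, I would argue both inclusions: the computation above gives that every $\Phi(\varphi)$ lies in this set, and conversely, given such an $f$, the candidate $\varphi := \Phi^{-1}(f)$ has $\varphi_x = (f/r)^r>0$ and smooth, satisfies $\varphi(0)=0$, and $\varphi(1)=\frac{1}{r^r}\|f\|_{L^r}^r = 1$, so it descends to a well-defined orientation-preserving diffeomorphism of $S^1$ fixing the origin, i.e.\ $\varphi\in\Diff_0(S^1)$. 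I expect the main (though still mild) obstacle to be the bookkeeping around the circle identification: one must check that all functions are genuinely periodic and that $\varphi$ descends to a circle diffeomorphism rather than merely a map of $[0,1]$; the condition $\varphi(1)=1$ together with smoothness and periodicity of $\varphi_x$ is what guarantees this, and this is precisely where the sphere-constraint $\|f\|_{L^r}=r$ enters as a necessary and sufficient condition on the image.

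=== END PROOF PROPOSAL ===
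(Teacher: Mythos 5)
Your proposal is correct and follows essentially the same route as the paper: the isometry computation $D_{\varphi,h}\Phi=\varphi_x^{\frac{1}{r}-1}h_x$ carried over verbatim from the non-periodic case, and the identification of the image via $1=\varphi(1)-\varphi(0)=\int_0^1\varphi_x\,dx=\frac{1}{r^r}\|f\|_{L^r}^r$. The extra details you supply (tangency to the sphere, explicit verification that $\Phi^{-1}(f)$ descends to a circle diffeomorphism) are correct and only flesh out what the paper leaves as ``direct calculation.''
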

  
\begin{proof}
The variation formula can be obtained exactly as in the non-periodic case. Thus it only remains to prove the statement on the image. Therefore let $f= r\;{\varphi_x}^{\frac{1}{r}}=\Phi(\varphi)$ for some 
$\varphi\in \Diff_0(S^1)$. Since elements of $\Diff_0(S^1)$
are orientation preserving diffeomorphisms it follows that $f>0$. Furthermore, by identifying $S^1$ with the interval $[0,1]$, we have
\begin{align}
1=\varphi(1)-\varphi(0)=\int_0^1 \varphi_x\;  dx=    \frac{1}{r^r}\int_0^1 f^r dx = \frac{1}{r^r} \|f\|^r_{L^r},
\end{align}
which concludes the characterization of the image. The statement on the inverse follows by direct calculation. 
\end{proof}

\bibliographystyle{abbrv}

\end{document}